\newtheorem{theorem}{Theorem}[section]
\newtheorem{corollary}[theorem]{Corollary}
\newtheorem{lemma}[theorem]{Lemma}
\newtheorem{proposition}[theorem]{Proposition}
\newtheorem{claim}{Claim}
\theoremstyle{remark}
\newtheorem{example}[theorem]{Example}
\newtheorem{remark}[theorem]{Remark}
\newcommand{\CA}{\mathcal{A}}
\newcommand{\CB}{\mathcal{B}}
\newcommand{\CALD}{\mathcal{D}}
\newcommand{\CF}{\mathcal{F}}
\newcommand{\CG}{\mathcal{G}}
\newcommand{\CP}{\mathcal{P}}
\newcommand{\CT}{\mathcal{T}}
\newcommand{\CX}{\mathcal{X}}
\newcommand{\CY}{\mathcal{Y}}
\newcommand{\frakp}{\mathfrak{p}}
\newcommand{\frakq}{\mathfrak{q}}
\newcommand{\BBQ}{{\mathbb Q}}
\newcommand{\BBZ}{{\mathbb Z}}
\newcommand{\BD}{{\mathbf D}}
\newcommand{\Hom}{\mathrm{Hom}}
\newcommand{\Ima}{\mathrm{Im}}
\newcommand{\Ker}{\mathrm{Ker}}
\newcommand{\Coker}{\mathrm{Coker}}
\newcommand{\Ext}{\mathrm{Ext}}
\newcommand{\Spec}{\mathrm{Spec}}
\newcommand{\supp}{\mathrm{ supp}}
\newcommand{\Ann}{\mathrm{ Ann}}
\newcommand{\Gen}{\mathrm{Gen}}
\newcommand{\Def}{\mathrm{Def}}
\newcommand{\Add}{\mathrm{ Add}}
\newcommand{\Modr}{\mathrm{ Mod}\text{-}}
\begin{document}


\title{The ascent-descent property for $2$-term silting complexes}

\author{Simion Breaz}

\address{Simion Breaz: "Babe\c s-Bolyai" University, Faculty of Mathematics and Computer Science, Str. Mihail Kog\u alniceanu 1, 400084, Cluj-Napoca, Romania}

\email{bodo@math.ubbcluj.ro}


\subjclass[2010]{13C60, 13C12, 13C13, 13D07,
16D40}

\keywords{Silting, ring extension, cosilting.}

\begin{abstract}
We will prove that over commutative rings the silting property of $2$-term complexes induced by morphisms between projective modules is preserved and reflected by faithfully flat extensions.
\end{abstract}

\maketitle

\section{Introduction}
Let $\lambda:R\to S$ be a homomorphism of unital rings. We consider the extension of scalars functor $-\otimes_R S:\Modr R\to \Modr S$, and we will say that a property $\CP$ associated to a complex of modules \textsl{ascends} along $\lambda$ if the functor $-\otimes_R S$ preserves the property $\CP$, i.e. for every complex $\mathbf{C}$ of right $R$-modules which satisfies $\CP$, the complex $\mathbf{C}\otimes_R S$ satisfies $\CP$ in $\Modr S$. The property $\CP$ \textsl{descends} along $\lambda$ if a complex $\mathbf{C}$ in $\Modr R$ satisfies $\CP$ provided that $\mathbf{C}\otimes_R S$ satisfies $\CP$ as a complex of right $S$-modules. The above definitions are natural extension of the corresponding ascent/descent notions associated to module properties, e.g. \cite[Definition 3.5]{EGT}. The properties of modules which ascend along flat ring homomorphisms and descend along faithful flat ring homomorphisms (called \textsl{ascent-descent} properties) play an important role in commutative algebra since the corresponding properties associated to quasi-coherent sheaves have a local character, \cite[Lemma 3.4]{EGT}. For instance, for modules over commutative rings the properties ``projective'', \cite{Gr-Ra} and \cite[Section 058B]{staks-04VM}, and ``1-tilting'', \cite[Theorem 3.13]{Hr-St-Tr}, are ascent-descent. We mention here that the ascending property of tilting also plays an important role in the non-commutative case since they are used to characterize derived equivalences, \cite{Ri}. We refer to \cite{Mi} for a general approach of this case.    

In this paper we will study the ascent and descent properties for \textsl{$2$-term silting complexes}. These are complexes $\cdots \to 0\to P^{-1}\overset{\sigma}\to P^0\to 0\to\cdots$, concentrated in $-1$ and $0$, which are silting objects in the unbounded derived category $\BD(R)$ of $R$. In order to simplify the presentation, we will identify, as in \cite{AMV:2015}, every $2$-term complex $\cdots \to 0\to P^{-1}\overset{\sigma}\to P^0\to 0\to\cdots$  with the homomorphism $\sigma:P^{-1}\to P^0$. If $\sigma$ is a  $2$-term silting complex we will say that  $\Coker(\sigma)$ is a silting module (with respect to the homomorphism $\sigma$).
These notions were introduced in \cite{AMV:2015}  as non-compact versions of the $\tau$-tilting modules, \cite{AIR}, and of the two term silting complexed, \cite{Ke-Vo}. The role of silting modules in the study of module categories is described in \cite{An-abun}. For the finitely presented case we mention here the results proved in \cite{AIR} and \cite[Section 5]{DF}. If $R$ is a ring then (bounded) silting complexes play in the derived category $\BD(R)$ a similar role with that of tilting modules in module categories, \cite{Wei-isr}. In spite of this,   correspondences or similarities between the influences of tilting modules and silting homomorphisms on the module category can be established only for particular instances (e.g. \cite{Br-Ze-2018}). We refer to \cite{An-18} for the general theory of silting objects in triangulated categories. 

In Theorem \ref{thm-1} we provide, for the general case, a characterization for the ascending property of $2$-term silting complexes. For the commutative case the ascending property of $2$-term silting complexes is valid along all ring homomorphisms, Theorem \ref{commutative-1}. Moreover, we will prove in Theorem \ref{silting} that the silting property associated to $2$-term complexes descends along faithfully flat ring homomorphisms of commutative rings.    

In this paper all rings and all ring homomorphisms are unital. If $\lambda:R\to S$ is a homomorphism of rings then the extension of scalars functor is denoted by  $-\otimes_R S:\Modr R\to \Modr S$. The restriction of scalars functors is denoted by $\lambda^*=\Hom_S(S,-):\Modr S\to \Modr R$.  When there is no danger of confusion, we will consider every class of objects in $\Modr S$ as a subclass of $\Modr R$. Therefore, if $\CX$ is a class of $R$-modules, and $\CY$ is a class of $S$-modules then $\CX\cap\CY$ means the class of all modules from $N\in\CY$ such that $\lambda^\star(N)\in \CX$. In particular, we will identify a right $S$-module $N$ with its image $\lambda^\star(N)$.  

\section{The ascent property}

Let $R$ be a unital ring. We consider a homomorphism of projective right $R$-modules  
$\sigma:P^{-1} \to P^{0}$, and we denote by $T$ the cokernel of $\sigma$.   

Then we can associate to $\sigma$ the class  
$$\CALD_{\sigma}= \{ X\in\Modr R \mid \Hom_{R}(\sigma,X) \text{ is an epimorphism}\}.$$ Since $\CALD_\sigma$ is the kernel of the functor $\Coker(\Hom_R(\sigma,-))$, by using the properties of this functor (e.g. \cite[Proposition 4]{Br_Ze:2014}), it follows that $\CALD_\sigma$ is closed with respect to epimorphic images, extensions, and direct products. 

Following \cite{AMV:2015}, we will say that $\sigma$ is \textsl{partial silting} or that the module $T=\Coker(\sigma)$ is \textsl{partial silting with respect to $\sigma$} if  
$\CALD_\sigma$ is a torsion class (i.e. $\CALD_\sigma$ is also closed under direct sums) and $T\in \CALD_\sigma$. Then $\Gen(T)\subseteq \CALD_\sigma\subseteq {T^\perp}$ and $(\Gen(T), T^{\circ})$ is a torsion pair, where $$\Gen(T)=\{M\in \Modr R\mid \textrm{there exists an epimorphism } T^{(I)}\to M\}$$ is the class of all $T$-generated right $R$-modules, ${T^\perp}=\Ker\Ext_R^1(T,-)$, and ${T^\circ}=\Ker\Hom_R(T,-)$.
If  $\CALD_\sigma=\Gen(T)$ then we will say that $\sigma$ is \textsl{a $2$-term silting complex} and $T$ is called a \textsl{silting module with respect to $\sigma$} (we recall that in \cite[Theorem 4.9]{AMV:2015} it is proved that a homomorphism $\sigma$ satisfies the above condition if and only if it represents a silting complex in the associated unbounded derived category).

If $\lambda:R\to S$ is a ring homomorphism and $\sigma:P^{-1} \to P^{0}$ is a homomorphism of projective right $R$-modules then we will denote $\sigma\otimes_R S=\sigma\otimes_R 1_S$ the induced homomorphism of projective right $S$-modules.

\begin{lemma}\label{basic1}
Let $\lambda:R\to S$ be a ring homomorphism. If $\sigma:P^{-1} \to P^{0}$ is a homomorphism between projective right $R$-modules then  $\CALD_{\sigma\otimes_R S}=\CALD_{\sigma}\cap \Modr S$.
\end{lemma}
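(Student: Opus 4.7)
The plan is to reduce the claim to the tensor-restriction adjunction $(-\otimes_R S)\dashv \lambda^*$. For any right $R$-module $M$ and any right $S$-module $N$ this adjunction supplies a natural isomorphism
$$\Phi_{M,N}\colon \Hom_S(M\otimes_R S,\, N)\xrightarrow{\;\cong\;} \Hom_R(M,\, \lambda^*(N)),\qquad f\longmapsto \bigl(m\mapsto f(m\otimes 1)\bigr),$$
which is natural in both $M$ and $N$.

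First, I would apply naturality of $\Phi$ in the first variable to the morphism $\sigma\colon P^{-1}\to P^0$, which produces the commutative square
\begin{equation*}
\begin{CD}
\Hom_S(P^0\otimes_R S,\, N) @>{\Hom_S(\sigma\otimes_R S,\, N)}>> \Hom_S(P^{-1}\otimes_R S,\, N) \\
@V{\Phi_{P^0,N}}VV @V{\Phi_{P^{-1},N}}VV \\
\Hom_R(P^0,\, \lambda^*(N)) @>{\Hom_R(\sigma,\, \lambda^*(N))}>> \Hom_R(P^{-1},\, \lambda^*(N))
\end{CD}
\end{equation*}
whose vertical arrows are the adjunction isomorphisms. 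Second, under the identification of the $S$-module $N$ with its restriction $\lambda^*(N)$ adopted at the end of the introduction, the bottom arrow is precisely $\Hom_R(\sigma,\, N)$. Third, since the vertical maps are isomorphisms, the top arrow is surjective if and only if the bottom arrow is; that is, $N\in \CALD_{\sigma\otimes_R S}$ if and only if $\lambda^*(N)\in \CALD_\sigma$, which in the notation of the introduction reads $N\in \CALD_\sigma\cap \Modr S$. This yields the stated equality.

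Since the lemma is a purely formal consequence of adjunction, there is no substantive obstacle to overcome. The only point that deserves a moment's attention is the compatibility of the adjunction isomorphism with the paper's convention of tacitly identifying an $S$-module with its restriction along $\lambda$; once this identification is made explicit, the bottom row of the square above is literally the map appearing in the definition of $\CALD_\sigma$ evaluated at $N$.
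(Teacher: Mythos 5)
Your argument is correct and is essentially the paper's own proof: the paper also forms the commutative square relating $\Hom_R(\sigma,\Hom_S(S,M))$ and $\Hom_S(\sigma\otimes_R S,M)$ via the adjunction isomorphisms (writing $\lambda^\star(M)=\Hom_S(S,M)$) and concludes that one row is an epimorphism if and only if the other is. The only difference is notational, namely which row you place on top and your explicit formula for the adjunction map.
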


\begin{proof}
Let $M$ be a right $S$-module. We have the commutative diagram
$$\xymatrix{\Hom_R(P^0,\Hom_S(S,M))\ar[rrr]^{\Hom_R(\sigma,\Hom_S(S,M))}\ar[d]^\cong &&&\Hom_R(P^{-1},\Hom_S(S,M))\ar[d]^{\cong}\\
\Hom_S(P^0\otimes_R S,M)\ar[rrr]^{\Hom_S(\sigma\otimes_R S,M)} &&& \Hom_S(P^{-1}\otimes_R S,M)}.$$
It follows that a right $S$-module $M$ belongs to $\CALD_{\sigma\otimes_R S}$ if and only if the right $R$-module $\lambda^\star(M)=\Hom_S(S,M)$ is in $\CALD_\sigma$.
\end{proof}

\begin{theorem}\label{thm-1}
Suppose that $\sigma$ represents a $2$-term silting complex. If $\lambda:R\to S$ is a ring homomorphism then the following are equivalent: 
\begin{enumerate}[{\rm (1)}]
 \item $\sigma\otimes_R S$ is a $2$-term silting complex in $\Modr S$;
 \item $\lambda^\star(T\otimes_R S)$ is $T$-generated.
\end{enumerate}
\end{theorem}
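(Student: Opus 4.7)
The plan is to exploit Lemma \ref{basic1} together with the hypothesis that $\sigma$ is already silting over $R$, which gives $\CALD_\sigma=\Gen(T)$, hence
$$\CALD_{\sigma\otimes_R S}=\Gen(T)\cap\Modr S.$$
Since $P^{-1}\otimes_R S$ and $P^0\otimes_R S$ remain projective $S$-modules, verifying that $\sigma\otimes_R S$ is a $2$-term silting complex reduces to proving the set-equality $\CALD_{\sigma\otimes_R S}=\Gen_S(T\otimes_R S)$, so the entire game is to compare these two classes across the change of rings.

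The implication $(1)\Rightarrow (2)$ is immediate: if $\sigma\otimes_R S$ is silting, then $T\otimes_R S\in \CALD_{\sigma\otimes_R S}$ tautologically, and Lemma \ref{basic1} translates this into $\lambda^\star(T\otimes_R S)\in \CALD_\sigma=\Gen(T)$.

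For $(2)\Rightarrow (1)$, Lemma \ref{basic1} combined with (2) first delivers $T\otimes_R S\in \CALD_{\sigma\otimes_R S}$. I would then verify that $\CALD_{\sigma\otimes_R S}$ is closed under arbitrary direct sums, which follows because restriction of scalars commutes with direct sums and $\Gen(T)$ is sum-closed. Combined with the extension, direct product, and epimorphic image closure properties inherited from $\CALD_\sigma$, this already gives $\Gen_S(T\otimes_R S)\subseteq \CALD_{\sigma\otimes_R S}$. For the reverse inclusion, given $M\in \CALD_{\sigma\otimes_R S}$, Lemma \ref{basic1} says $\lambda^\star(M)\in \Gen(T)$, so I can pick an $R$-epimorphism $T^{(I)}\to \lambda^\star(M)$, tensor with $S$ to obtain the $S$-epimorphism $(T\otimes_R S)^{(I)}\to \lambda^\star(M)\otimes_R S$, and compose with the adjunction counit $\lambda^\star(M)\otimes_R S\to M$, $m\otimes s\mapsto ms$, which is surjective because $m=m\cdot 1_S$. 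This exhibits $M\in \Gen_S(T\otimes_R S)$, finishing the argument.

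The only non-routine ingredient is the counit trick, which converts an $R$-module generation of $\lambda^\star(M)$ by $T$ into an honest $S$-module generation of $M$ by $T\otimes_R S$; once one recognises that this counit supplies the missing bridge, the remainder is bookkeeping with Lemma \ref{basic1} and the standard closure properties of $\CALD_\sigma$ recalled just before the lemma.
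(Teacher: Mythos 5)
Your proof is correct and follows essentially the same route as the paper's: both implications hinge on Lemma \ref{basic1}, the counit map $\lambda^\star(M)\otimes_R S\to M$ to convert $R$-generation of $\lambda^\star(M)$ into $S$-generation of $M$, and the inherited closure of $\CALD_{\sigma\otimes_R S}$ under direct sums (the paper simply defers the check $T\otimes_R S\in\CALD_{\sigma\otimes_R S}$ to the end rather than proving it first). One small remark: the closure under epimorphic images, extensions, and products is not really ``inherited'' from $\CALD_\sigma$ via restriction of scalars — it holds for $\CALD_{\sigma\otimes_R S}$ intrinsically, as noted at the start of Section~2 for any such class; this does not affect the argument.
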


\begin{proof} 
(1)$\Rightarrow$(2) Since $\sigma\otimes_R S$ represents a $2$-term silting complex, it follows that $T\otimes_RS\in\CALD_{\sigma\otimes_RS}$. Then (2) is a consequence of Lemma \ref{basic1}.

(2)$\Rightarrow$(1) 
Since $\CALD_\sigma$ is closed under direct sums, it is obvious, by Lemma \ref{basic1}, that $\CALD_{\sigma\otimes_R S}$ has the same property. 

For every $M\in \CALD_{\sigma\otimes_R S}$ we have $\lambda^\star(M)\in \CALD_\sigma$. Since $\sigma$ is a $2$-term silting complex, it follows that $\lambda^\star(M)$  is $T$-generated. Then there exists an $R$-epimorphism $T^{(I)}\to \lambda^\star(M)$ which induces an $S$-epimorphism $T\otimes_R S^{(I)}\to \lambda^\star(M)\otimes_R S$. But it is well known that the canonical $S$-homomorphism $\lambda^\star(M)\otimes_R S\to M$, $x\otimes s\mapsto x(s)$, is an epimorphism. It follows that 
$\CALD_{\sigma\otimes_R S}\subseteq \Gen(T\otimes_R S)$. 

Therefore, it is enough to prove that $T\otimes_R S\in \CALD_{\sigma\otimes_R S}$. By Lemma \ref{basic1}, this is equivalent to $\lambda^\star(T\otimes_R S)\in\CALD_\sigma$. Since $\sigma$ is silting, we have $\lambda^\star(T\otimes_R S)\in\CALD_\sigma$ if and only if $\lambda^\star(T\otimes_R S)$ is $T$-generated, and this last property is assumed in (2).  
\end{proof}


\begin{remark}
A similar proof, using the isomorphism $\Hom_S(X,\Hom_R(S,I))\cong \Hom_R(X\otimes_S S,I)$, can be used to obtain the dual of Theorem \ref{thm-1} for cosilting modules (we refer to \cite{Br-Po} and \cite{ZW2} for the basic properties of these modules). Therefore, if $\lambda:R\to S$ is a unital ring homomorphism and $C\in \Modr R$ is a  cosilting module with respect to the injective copresentation $\beta:I^0\to I^1$ then the coinduced $S$-module $\Hom_R(S,C)$ is a cosilting module with respect to $\Hom_R(S,\beta)$ if and only if $\Hom_R(S,C)$ is $C$-cogenerated.
\end{remark}

In the case of surjective ring homomorphisms, the extension of scalars functor always preserves the $2$-term silting complexes.

\begin{corollary}
Let $\lambda:R\to S$ be a surjective ring homomorphism. Then for every $2$-term silting complex $\sigma$ the induced complex $\sigma\otimes_R S$ is a $2$-term silting complex. 
\end{corollary}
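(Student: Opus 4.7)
The plan is to invoke Theorem \ref{thm-1} and reduce the statement to a single generation claim. By Theorem \ref{thm-1}, it suffices to verify that $\lambda^\star(T\otimes_R S)$ is $T$-generated, where $T=\Coker(\sigma)$. So the whole argument boils down to examining the canonical $R$-linear map $\eta_T:T\to \lambda^\star(T\otimes_R S)$ sending $t\mapsto t\otimes 1$.

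The key step is to observe that surjectivity of $\lambda$ forces $\eta_T$ to be an epimorphism. Indeed, every element of $T\otimes_R S$ is a finite sum of simple tensors $t\otimes s$, and since $\lambda$ is surjective we can write $s=\lambda(r)$ for some $r\in R$. The $R$-action on $T\otimes_R S$ (viewed via $\lambda^\star$) then gives $t\otimes s = t\otimes \lambda(r)=tr\otimes 1 = \eta_T(tr)$. Hence $\eta_T$ is surjective, so $\lambda^\star(T\otimes_R S)$ is a quotient of $T$, and therefore trivially $T$-generated.

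With this in hand, condition (2) of Theorem \ref{thm-1} holds, and we conclude that $\sigma\otimes_R S$ is a $2$-term silting complex in $\Modr S$. I do not anticipate any genuine obstacle here: the only point requiring care is the identification of the $R$-action on $\lambda^\star(T\otimes_R S)$, for which surjectivity of $\lambda$ is exactly what is needed to absorb scalars from $S$ back into $R$.
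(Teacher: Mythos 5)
Your proof is correct and follows essentially the same route as the paper: the paper's proof simply cites the natural isomorphism $M\otimes_R S\cong M/IM$ for $I=\Ker(\lambda)$, which likewise exhibits $\lambda^\star(T\otimes_R S)$ as a quotient of $T$, hence $T$-generated, so Theorem \ref{thm-1}(2) applies. Your explicit surjectivity check of the canonical map $t\mapsto t\otimes 1$ is just an unpacked version of that isomorphism.
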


\begin{proof}
If $I=\Ker(\lambda)$ then for every right $R$-module $M$ we have a natural $R$-isomorphism $M\otimes_R S\cong M/IM$ (see \cite[12.11]{Wis}). 
\end{proof}

However, even in the case of non-surjective ring epimorphisms the above corollary is not true. The following example was communicated to me by Lidia Angeleri-H\"ugel.

\begin{example}
Let $R$ be the (Kronecker) path algebra associated to the graph $\mathbf{2} \leftleftarrows \mathbf{1}$ over a field $K$. The silting modules over Kronecker algebras are described in \cite[Examples 5.10, and 5.18]{AMV2} and in \cite[Example 2.20]{Jasso}. If $\mathbf{1}$ is the simple injective $R$-module and $\mathbf{2}$ is the simple projective module in $\Modr R$, we denote by $P$ an indecomposable preprojective  module which is not isomorphic to $\mathbf{2}$. By \cite[Example 5.18]{AMV2} there exists a universal localization $\lambda:R\to S$ such that $\lambda^\star(\Modr S)=\Add(P)$. Since $\lambda$ is a universal localization, the natural homomorphism $\textbf{2}\otimes_R\lambda:\textbf{2}\to \textbf{2}\otimes_R S$ is a the $\Add(P)$-reflection of $\textbf{2}$, i.e. for every $X\in \Add(P)$, every homomorphism $\textbf{2}\to X$ factorizes through $\textbf{2}\otimes_R\lambda$. But $\Hom_R(\textbf{2},\Add(P))\neq 0$, so it follows that $\textbf{2}\otimes_R S\neq 0$. Moreover, it is obvious that $\textbf{2}$ cannot generate the modules from $\Add(P)$. 
Note that $\textbf{2}$ is a silting module with respect to a homomorphism $\sigma$. By the above remarks it follows that the complex $\sigma\otimes_R S$ is not silting.           
\end{example}

In the following we will see an example of a tilting module such that the induced module with respect to a ring homomorphism is silting, but it is not $1$-tilting. This is based on the example presented in \cite[Example 4.2]{As-Ma}.

\begin{example}
Let $R$ be the algebra associated to the quiver $$\mathbf{1}\overset{\beta}\longleftarrow \mathbf{2}\overset{\alpha}\longleftarrow \mathbf{3},$$ and $S$ algebra associated to the quiver 
$$\xymatrix{\mathbf{1}\ar@{<-}[r]^{\beta} \ar@/_1pc/[rr]_{\gamma} &\mathbf{2}\ar@{<-}[r]^{\alpha}  & \mathbf{3}
}$$ such that $\alpha\beta\gamma=0$. Then $S$ is a split by nilpotent extension of $R$ induced by the module generated by $\gamma$. Let $\lambda:R\to S$ be the corresponding ring homomorphism. Then $T=\mathbf{1}\oplus\mathbf{2}\oplus\begin{array}{c}\mathbf{3}\\ \mathbf{2}\\ \mathbf{1} \end{array}$ is a tilting $R$-module. If $0\to P^{-1}\overset{\sigma}\to P^{0}\to T\to 0$ is a minimal projective presentation for $T$, it is proved in \cite[Example 4.2]{As-Ma}, by using \cite[Lemma 2.2]{As-Ma}, that $T\otimes S$ is not of projective dimension at most $1$, so it is not an $1$-tilting $S$-module. However $\lambda^\star(T\otimes S)\in\Gen(T)$, hence $\sigma\otimes S$ is a $2$-term silting complex.
\end{example}

In the commutative case the silting property associated to $2$-term silting complexes ascends along all ring homomorphisms.

\begin{theorem}\label{commutative-1}
If $R$ and $S$ are commutative rings, $\lambda:R\to S$ is a unital ring homomorphism, and $\sigma:P^{-1} \to P^{0}$ is a $2$-term silting complex then the complex $\sigma\otimes_R S$ is silting. 
\end{theorem}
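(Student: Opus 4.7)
The plan is to reduce to Theorem \ref{thm-1} and then exploit commutativity of $S$ to write down an explicit $T$-generation of $\lambda^\star(T\otimes_R S)$.

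By Theorem \ref{thm-1} applied to the $2$-term silting complex $\sigma$ and the ring homomorphism $\lambda$, the conclusion that $\sigma\otimes_R S$ is silting in $\Modr S$ is equivalent to the single assertion that $\lambda^\star(T\otimes_R S)$ lies in $\Gen(T)$. So all I need to do is produce, in $\Modr R$, an $R$-linear epimorphism from a direct sum of copies of $T$ onto $T\otimes_R S$.

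The key observation is that, because $S$ is commutative, each element $s\in S$ gives a well-defined \emph{right $R$-linear} map
$$\phi_s\colon T\to T\otimes_R S,\qquad \phi_s(t)=t\otimes s.$$
Indeed, for $r\in R$ and $t\in T$ we have $\phi_s(tr)=tr\otimes s=t\otimes \lambda(r)s$, while $\phi_s(t)\cdot r=(t\otimes s)\lambda(r)=t\otimes s\lambda(r)$; the two agree because $S$ is commutative, so $\lambda(r)s=s\lambda(r)$. Summing over all $s\in S$, the map
$$\Phi\colon T^{(S)}\longrightarrow \lambda^\star(T\otimes_R S),\qquad (t_s)_{s\in S}\longmapsto \sum_{s\in S} t_s\otimes s,$$
is an $R$-homomorphism, and it is surjective since every element of $T\otimes_R S$ is a finite sum of elementary tensors $t_i\otimes s_i=\Phi\bigl((t_i)_{s_i}\bigr)$.

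Hence $\lambda^\star(T\otimes_R S)\in \Gen(T)$, and Theorem \ref{thm-1} gives that $\sigma\otimes_R S$ is a $2$-term silting complex in $\Modr S$. I don't foresee a serious obstacle here: once Theorem \ref{thm-1} is invoked, everything reduces to the commutativity of $S$, which makes $t\mapsto t\otimes s$ an $R$-linear map; commutativity of $R$ is only needed implicitly to make sense of a single category $\Modr R$ on which the argument takes place.
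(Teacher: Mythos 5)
Your proof is correct and follows essentially the same route as the paper: both reduce via Theorem \ref{thm-1} to showing $\lambda^\star(T\otimes_R S)\in\Gen(T)$, and both use commutativity of $S$ to make this work. The paper packages your componentwise maps $\phi_s$ into a single step, tensoring an $R$-$R$-bimodule epimorphism $R^{(I)}\to S$ with $T$ to get an $R$-epimorphism $T^{(I)}\cong T\otimes_R R^{(I)}\to T\otimes_R S$, which is the same construction as your $\Phi$.
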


\begin{proof}
Let $T=\Coker(\sigma)$. In order to apply Theorem \ref{thm-1}, we will prove that $\lambda^\star(T\otimes_R S)$ is $T$-generated. Let $\alpha:R^{(I)}\to S\to 0$ be an epimorphism of $R$-modules. Since $R$ and $S$ are commutative, $\alpha$ is a homomorphism of $R$-$R$-bimodules, and it follows that $1_T\otimes_R \alpha:T\otimes_R R^{(I)}\to T\otimes_R S$ is an $R$-epimorphism.       
\end{proof}

\section{The descent property for commutative rings}

The main aim of this section is to prove that in the commutative case the property ``$2$-term silting complex'' descends along faithfully flat ring homomorphisms. We note that the restriction to faithfully flat ring homomorphisms is natural. 

\begin{example}
Let $\lambda:\BBZ\to \BBQ$ be the canonical embedding. Therefore, $\lambda$ is a ring epimorphism, but it is not faithfully flat. Let $0\to F^{-1}\overset{\sigma}\to F^0\to \BBQ\to 0$ be a projective presentation in $\Modr\BBZ$ for the group of rational numbers. Then $\CALD_\sigma=\Ker\Ext_\BBZ^1(\BBQ,-)$ contains all finite abelian groups, \cite[Property 52 (D)]{fuchs}. But for every finite group $G$, we have $\Hom_\BBZ(\BBQ,G)=0$, hence $\CALD_\sigma\neq \Gen(\BBQ)$. It follows that $\sigma$ is not a $2$-term silting complex. However, it is easy to see that   $\sigma\otimes_\BBZ\BBQ$ is a $2$-term silting complex of $\BBQ$-modules.  
\end{example}

In this section all rings are commutative. If $R$ is a commutative ring then $\Spec(R)$ will be the spectrum of $R$, and for every $\frakp\in\Spec(R)$ we will denote by $\kappa(\frakp)$ the field of fractions of $R/\frakp$. If $I$ is an ideal of $R$ then $V(I)=\{\frakp\in\Spec(R)\mid I\subseteq \frakp\}$. 
If $\lambda:R\to S$ is a faithfully flat ring homomorphism, then it is injective. Therefore, in order to simplify the presentation, we will often view $R$ as a subring of $S$. For instance, if $J$ is an subset of $S$ we will write $R\cap J$ instead of $R\cap \lambda^{-1}(J)$.   
We refer to \cite{Mats-comm} for other notations and for the basic properties which will be used here.

If $\sigma:P^{-1}\to P^0$ is a homomorphism between projective $R$-modules then we will associate to $\sigma$, as in \cite{An-Hr}, the class 
$$\CT_\sigma=\{M\in\Modr R\mid \sigma\otimes_R M \textrm{ is a monomorphism}\}.$$ Moreover, we also use the class
$$V_\sigma=\{\frakp\in\Spec(R)\mid \sigma\otimes_R\kappa(\frakp) \textrm{ is not a monomorphism}\}.$$ 

The class $\CT_\sigma$ is closed under submodules  and extensions (e.g. \cite[Lemma 2.2.2]{Br-Mo}). Moreover, using \cite[Lemma 3.3 and Lemma 4.2]{An-Hr}, we observe that if $\sigma$ is a $2$-term silting complex then $\CT_\sigma$ is the torsion-free class associated to a \textsl{hereditary} torsion theory of \textsl{finite type}  $(\CA_\sigma,\CT_\sigma)$, i.e. $\CT_\sigma$ is also closed under direct products, injective envelopes, and direct limits.

A class $\CALD$ of $R$-modules is called a \textsl{silting class} if there exists a $2$-term silting complex $\sigma$ such that $\CALD=\CALD_\sigma$ (i.e. there exits a silting module $T$ such that $\CALD=\Gen(T)$). 
In \cite[Theorem 4.7]{An-Hr} it is proved that there exists a bijective correspondence between the silting classes of a commutative ring and the Gabriel filters of finite type. We recall from \cite[Theorem 2.2]{Ga-Pr} that if $R$ is a commutative ring then there exist $1-1$ correspondences between the class of Gabriel filters of finite type defined on $R$, the class of hereditary torsion theories of finite type on $\Modr R$ and the set of \textsl{Thomason subsets} of $\Spec(R)$, i.e. unions of families of subsets of the the form $V(I)$ with $I$ finitely generated ideals (these are the open sets associated to Hochster's topology defined on $\Spec(R)$).
Therefore, we can enunciate the following

\begin{theorem}\cite[Theorem 4.7]{An-Hr}, \cite[Theorem 2.2]{Ga-Pr} \label{main-corr}
If $R$ is a commutative ring then there exist bijections between the following classes:
\begin{enumerate}[{\rm (1)}]
 \item the class of hereditary torsion theories of finite type in $\Modr R$;
 \item the class of Gabriel filters of finite type;
 \item the class of Thomason subsets in $\Spec(R)$;
 \item the class of silting classes in $\Modr R$. 
\end{enumerate}
\end{theorem}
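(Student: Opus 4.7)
The plan is to assemble the four-fold correspondence from two already established results cited in the statement, together with the classical Gabriel correspondence between hereditary torsion theories and Gabriel filters. I would organise the argument as three bijections in a row, and then remark that the various compositions agree.

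For (1)$\Leftrightarrow$(2), I would invoke classical Gabriel theory: a hereditary torsion class $\CT$ determines its Gabriel filter $F_{\CT}=\{I\trianglelefteq R\mid R/I\in\CT\}$, and conversely a Gabriel filter $F$ determines the hereditary torsion class $\{M\in\Modr R\mid \Ann_R(m)\in F\text{ for every }m\in M\}$. A routine check shows that the finite-type hypothesis on $\CT$ (closure of the torsion class under direct limits) corresponds precisely to $F_{\CT}$ being generated, as a filter, by its finitely generated members.

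The bijection (2)$\Leftrightarrow$(3) is exactly \cite[Theorem 2.2]{Ga-Pr}: a Gabriel filter of finite type $F$ is sent to the Thomason subset $W_F=\bigcup_{I\in F,\ I \text{ f.g.}}V(I)$, and a Thomason subset $W=\bigcup_\alpha V(I_\alpha)$ produces the filter generated by $\{I_\alpha\}$. The bijection (1)$\Leftrightarrow$(4) is the content of \cite[Theorem 4.7]{An-Hr}: a silting class $\CALD_\sigma$ is assigned the hereditary torsion theory of finite type $(\CA_\sigma,\CT_\sigma)$ recalled just before the theorem, and every such torsion theory arises in this way.

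The single point that still requires verification is that the various ways of composing these bijections give the same map between any two of the four classes; for instance, the composite $(4)\to(1)\to(2)\to(3)$ must agree with the direct assignment that sends a silting class $\CALD_\sigma$ to $\{\frakp\in\Spec(R)\mid R/\frakp\in\CT_\sigma\}$. This compatibility is essentially automatic once one observes that all four descriptions are controlled by the same underlying data, namely the collection of cyclic modules $R/I$ belonging to the torsion class, together with the identification $V(I)=\supp(R/I)$ for finitely generated $I$. I expect this compatibility check to be the main formal (but routine) obstacle, since it is where the hypotheses of finite type and the noetherian-free commutative setting interact; no new input beyond the cited theorems is needed.
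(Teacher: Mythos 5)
Your proposal matches the paper's approach exactly: the theorem is assembled by citing \cite[Theorem 2.2]{Ga-Pr} for the bijections among (1), (2), (3) and \cite[Theorem 4.7]{An-Hr} for attaching (4), and the paper then records the explicit correspondences precisely to settle the compatibility you raise. One correction to your compatibility remark, though: the Thomason subset attached to $\CALD_\sigma$ is $V_\sigma=\{\frakp\in\Spec(R)\mid R/\frakp\notin\CT_\sigma\}$, the complement of what you wrote, since $\CT_\sigma$ is the torsion-\emph{free} class of the associated hereditary torsion theory and the complement of a Thomason set is in general not itself Thomason.
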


These bijections are described in the above mentioned papers. For reader's convenience we list here the correspondences which will used in the following. We refer to \cite[Section 10.39]{staks-04VM} for the basic properties of the support. In particular, we recall that for every finitely presented $R$-module $M$ we have, by Nakayama's Lemma, $\supp(M)=\{\frakp\mid M\otimes_R\kappa(\frakp)\neq 0\}$ (see the proof of \cite[Lemma 10.39.8]{staks-04VM}). 

If $\CG$ is a Gabriel filter of finite type on $R$ and $\CB\subseteq \CG$ is a cofinal set of finitely generated ideals then 
\begin{itemize}
 \item 
the torsion free class associated to $\CG$ is $$\CF_\CG=\Ker\Hom_R(\bigoplus_{I\in \CB}R/I,-);$$  
\item the Thomason subset associated to $\CG$ is $$V_{\CG}=\{\frakp\in\Spec(R)\mid \exists I\in \CG \textrm{ such that } \frakp\in\supp R/I\}.$$ We can use \cite[Lemma 10.39.9]{staks-04VM} to conclude that $$V_\CG=\{\frakp\in\Spec(R)\mid \exists I\in \CB \textrm{ such that } R/I\otimes_R\kappa(\frakp)\neq 0\};$$ 
\item the silting class induced by $\CG$ is $$\CALD_\CG=\bigcap_{I\in \CG}\Ker(-\otimes_R R/I)=\bigcap_{I\in \CB}\Ker(-\otimes_R R/I).$$   
\end{itemize}

If $\sigma$ is a $2$-term silting complex then the Gabriel filter of finite type induced by $\CT_\sigma$ via \cite[Theorem 2.2]{Ga-Pr} is 
$$\CG_\sigma=\{I\leq R \mid \Hom_R(R/I,\CT_\sigma)=0\}.$$
In the bijective correspondence constructed in \cite[Theorem 4.7]{An-Hr} the Gabriel filter associated to each silting class $\CALD$  is defined as 
$$\CG_\CALD=\{I\leq R\mid M=IM \textrm{ for all }M\in\CALD\}.$$

\begin{lemma}\label{filtre}
If $\CALD$ is a silting class and $\sigma$ is a $2$-term silting complex such that $\CALD=\CALD_\sigma$ then $\CG_\CALD=\CG_\sigma$. 
\end{lemma}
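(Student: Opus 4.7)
The strategy is to exploit the bijectivity of the correspondence in Theorem~\ref{main-corr}, which asserts that the assignment $\CG\mapsto\CALD_\CG=\bigcap_{I\in\CG}\Ker(-\otimes_R R/I)$ is a bijection between Gabriel filters of finite type on $R$ and silting classes in $\Modr R$. Because this map is in particular injective, proving $\CG_\CALD=\CG_\sigma$ reduces to showing that both filters are mapped to the same silting class, namely $\CALD=\CALD_\sigma$.

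That $\CALD_{\CG_\CALD}=\CALD$ is the defining property of the bijection constructed in \cite[Theorem 4.7]{An-Hr}, so no work is required on this side. The substantive task is to verify $\CALD_{\CG_\sigma}=\CALD_\sigma$. Here $\CG_\sigma$ is the Gabriel filter obtained from the hereditary torsion theory of finite type $(\CA_\sigma,\CT_\sigma)$ via the bijection of \cite[Theorem 2.2]{Ga-Pr}, and the identification one needs is precisely the statement that, under the full four-way correspondence of Theorem~\ref{main-corr}, the silting class $\CALD_\sigma$ and the torsion theory $(\CA_\sigma,\CT_\sigma)$ are paired. This is built into \cite[Theorem 4.7]{An-Hr}, where the bijection between silting classes and Gabriel filters is set up precisely by going through the intermediate hereditary torsion theory whose torsion-free class is $\CT_\sigma$; hence the Gabriel filter attached to $\CALD_\sigma$ via \cite[Theorem 4.7]{An-Hr} is the same as the one attached to $(\CA_\sigma,\CT_\sigma)$ via \cite[Theorem 2.2]{Ga-Pr}, i.e.\ $\CG_\sigma$.

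The main obstacle is to make sure the above compatibility is recorded explicitly rather than merely invoked, since it is the only nontrivial ingredient; once it is in place, injectivity of $\CG\mapsto\CALD_\CG$ yields $\CG_\CALD=\CG_\sigma$ immediately. A more hands-on alternative, which one could use as a check, is to verify the inclusion $\CG_\sigma\subseteq\CG_\CALD$ directly: if $I\in\CG_\sigma$, then using the description $\CALD_\sigma=\bigcap_{J\in\CG_\sigma}\Ker(-\otimes_R R/J)$ recalled in the text before the lemma, every $M\in\CALD$ satisfies $M\otimes_R R/I=0$, i.e.\ $M=IM$, so $I\in\CG_\CALD$; the reverse inclusion then follows because both filters have finite type and give back the same silting class $\CALD_\sigma$ under $\CG\mapsto\CALD_\CG$.
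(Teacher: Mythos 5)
Your overall strategy --- appeal to the injectivity of $\CG\mapsto\CALD_\CG$ and reduce the claim to $\CALD_{\CG_\sigma}=\CALD_\sigma$ --- is structurally the same as the paper's, but the one step you yourself flag as ``the only nontrivial ingredient'' is where the gap lies, and it is not filled.

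The problem is that the pairing of $\CALD_\sigma$ with $(\CA_\sigma,\CT_\sigma)$ is \emph{not} something one can simply read off from Theorem~\ref{main-corr}: that theorem is assembled from two separate bijections, \cite[Theorem 4.7]{An-Hr} (silting classes $\leftrightarrow$ Gabriel filters, via $\CG_\CALD=\{I\mid M=IM\ \forall M\in\CALD\}$) and \cite[Theorem 2.2]{Ga-Pr} (Gabriel filters $\leftrightarrow$ torsion theories, via $\CG\mapsto\CF_\CG$). Nothing in the way the correspondences are quoted in the paper says that the filter attached to $\CT_\sigma$ and the filter attached to $\CALD_\sigma$ coincide --- that is exactly what the lemma asserts. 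Your ``hands-on alternative'' is circular in the same place: you invoke the description $\CALD_\sigma=\bigcap_{J\in\CG_\sigma}\Ker(-\otimes_R R/J)$ as if it were recalled before the lemma, but what is recalled is $\CALD_\CG=\bigcap_{I\in\CG}\Ker(-\otimes_R R/I)$ for the filter $\CG$ that \emph{already sits in the bijection with $\CALD_\sigma$}; applying it to $\CG_\sigma$ presupposes $\CG_\sigma=\CG_{\CALD_\sigma}$, i.e.\ the lemma itself.

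What the paper actually does to close this gap is establish a concrete link between $\CT_\sigma$ and $\CALD_\sigma$: by \cite[Lemma 3.3(2)]{An-Hr} these two definable classes are dual, so by \cite[Corollary 3.4.21]{Prest} one has $X\in\CT_\sigma$ iff $X^+\in\CALD_\sigma$ and $M\in\CALD_\sigma$ iff $M^+\in\CT_\sigma$, where $X^+=\Hom_\BBZ(X,\BBQ/\BBZ)$. (This also shows $\CG_\sigma$ depends only on $\CALD$, not on the chosen $\sigma$.) Combining this with the definition of $\CG_\sigma$ via $\CT_\sigma$ and the natural isomorphism $\Hom_R(R/I,M^+)\cong(R/I\otimes_R M)^+$ gives $M\in\CALD_\sigma$ iff $R/I\otimes_R M=0$ for all $I\in\CG_\sigma$, which is precisely the construction of $\CG_{\CALD}$ in \cite[Proposition 4.4]{An-Hr}. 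Your proof needs this duality argument (or an equivalent bridge between $\CT_\sigma$ and $\CALD_\sigma$) to be complete.
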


\begin{proof}
Since in \cite[Lemma 3.3(2)]{An-Hr} it is proved that the definable classes $\CALD=\CALD_\sigma$ and $\CT_\sigma$ are dual, it follows from \cite[Corollary 3.4.21]{Prest} that a module $X$ belongs to $\CT_\sigma$ if and only if $X^+\in \CALD$, where $X^+=\Hom_\BBZ(X,\BBQ/\BBZ)$. Therefore, the correspondence $\CALD\mapsto \CG_\sigma$ is independent of the choice of $\sigma$. Moreover, a module $M$ is in $\CALD$ if and only if $X^+\in \CT_\sigma$. By \cite[Theorem 2.2]{Ga-Pr} this is equivalent to $\Hom(R/I,X^+)=0$ for all $I\in \CG_{\sigma}$. Using the natural isomorphism induced by $\Hom$ and $\otimes$, we obtain $M\in \CALD$ if and only if $R/I\otimes_R M=0$ for all $I\in \CG_{\sigma}$. This means that $\CG_{\sigma}$ is the Gabriel filter of finite type constructed in \cite[Proposition 4.4]{An-Hr}, and the proof is complete. 
\end{proof}

In the following we present some properties of the classes involved in Theorem \ref{main-corr}. We start with a well-known lemma.

\begin{lemma}\label{nat-kp}
If $\frakp\in \Spec(R)$ and $E(R/\frakp)$ is the injective envelope for $R/\frakp$ then 
\begin{enumerate}[{\rm (i)}]
\item $E(R/\frakp)$ is an injective cogenerator for $\Modr R_\frakp$;
 \item there is a natural isomorphism $\Hom_R(-,\kappa(\frakp))\cong \Hom_{R_\frakp}(-\otimes_R\kappa(\frakp),E(R/\frakp)).$ 
\end{enumerate}
\end{lemma}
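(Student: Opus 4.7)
My plan is to recognize both statements as consequences of Matlis-duality style facts applied to the local ring $R_\frakp$ with residue field $\kappa(\frakp)$, once I have verified the key identification $E(R/\frakp)\cong E_{R_\frakp}(\kappa(\frakp))$. First I would establish that $E(R/\frakp)$ carries a canonical $R_\frakp$-module structure: for any $s\in R\setminus\frakp$, multiplication by $s$ is injective on $R/\frakp$ (because $\frakp$ is prime), and this injectivity propagates to the essential extension $E(R/\frakp)$; indecomposability of the injective envelope of a uniform module then forces the injective endomorphism $\mu_s$ to be surjective, making $s$ a unit on $E(R/\frakp)$. Next, since $R/\frakp$ is a domain, it embeds essentially into $\kappa(\frakp)$, and extending the inclusion $R/\frakp\hookrightarrow E(R/\frakp)$ by $R$-injectivity realizes $\kappa(\frakp)$ as an essential submodule, so $E(R/\frakp)=E_R(\kappa(\frakp))$. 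A check with Baer's criterion (every ideal of $R_\frakp$ is extended from $R$, and $R_\frakp$ is $R$-flat) shows that $E(R/\frakp)$ is $R_\frakp$-injective, so it coincides with $E_{R_\frakp}(\kappa(\frakp))$.

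For (i), once the identification above is in place, I just need to show that over a local ring $(A,\mathfrak{m},k)$ the module $E_A(k)$ is an injective cogenerator of $\Modr A$. Given a nonzero $R_\frakp$-module $M$ and $0\neq m\in M$, the cyclic submodule $R_\frakp m\cong R_\frakp/\Ann(m)$ is a proper quotient of the local ring $R_\frakp$ and therefore has a unique simple top $\kappa(\frakp)$; composing the resulting surjection with the inclusion $\kappa(\frakp)\hookrightarrow E(R/\frakp)$ and extending to $M$ by $R_\frakp$-injectivity produces a homomorphism $M\to E(R/\frakp)$ that does not vanish on $m$.

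For (ii) I would decompose the isomorphism as a composition of two natural ones. On one side, the standard tensor-hom adjunction for the ring map $R\to\kappa(\frakp)$ gives a natural isomorphism
$$\Hom_R(X,\kappa(\frakp))\cong \Hom_{\kappa(\frakp)}(X\otimes_R\kappa(\frakp),\kappa(\frakp)).$$
On the other side, $X\otimes_R\kappa(\frakp)$ is a $\kappa(\frakp)$-vector space, hence annihilated by $\frakp R_\frakp$, so every $R_\frakp$-linear map $X\otimes_R\kappa(\frakp)\to E(R/\frakp)$ lands in the $\frakp R_\frakp$-torsion submodule of $E(R/\frakp)$, which is precisely the socle $\kappa(\frakp)$; this identifies
$$\Hom_{R_\frakp}(X\otimes_R\kappa(\frakp),E(R/\frakp))\cong\Hom_{\kappa(\frakp)}(X\otimes_R\kappa(\frakp),\kappa(\frakp))$$
naturally in $X$. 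Composing the two isomorphisms yields the claim.

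The only nontrivial step I expect is the bookkeeping around the identification $E(R/\frakp)=E_{R_\frakp}(\kappa(\frakp))$ together with the implicit claim that the socle of $E(R/\frakp)$, viewed as the $\frakp R_\frakp$-torsion, is exactly $\kappa(\frakp)$; once this is in place, both (i) and (ii) reduce to standard adjunction and cogeneration arguments over the local ring $R_\frakp$.
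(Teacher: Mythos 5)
Your proof is correct, and at its core it uses the same two ingredients as the paper's argument: a base-change adjunction together with the identification of the socle of $E(R/\frakp)$ with $\kappa(\frakp)$. The difference is in the packaging and in how much is taken for granted. The paper disposes of (i) as well known and proves (ii) in one line, writing $\kappa(\frakp)\cong\Hom_{R_\frakp}(\kappa(\frakp),E(R/\frakp))$ and then applying the adjunction $\Hom_R(-,\Hom_{R_\frakp}(\kappa(\frakp),E(R/\frakp)))\cong\Hom_{R_\frakp}(-\otimes_R\kappa(\frakp),E(R/\frakp))$; you instead factor the isomorphism through $\kappa(\frakp)$-linear duality, $\Hom_R(-,\kappa(\frakp))\cong\Hom_{\kappa(\frakp)}(-\otimes_R\kappa(\frakp),\kappa(\frakp))\cong\Hom_{R_\frakp}(-\otimes_R\kappa(\frakp),E(R/\frakp))$, which is the same computation with a different intermediate object. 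What your write-up buys is that it makes explicit everything the paper leaves implicit: the $R_\frakp$-module structure on $E(R/\frakp)$, the identification $E_R(R/\frakp)=E_{R_\frakp}(\kappa(\frakp))$ (needed even to make sense of statement (i)), its injectivity over $R_\frakp$ via Baer, and the cogenerator property. The one point you flag but do not actually prove --- that the submodule of $E(R/\frakp)$ annihilated by $\frakp R_\frakp$ is exactly $\kappa(\frakp)$ --- is settled quickly by essentiality: that submodule is a $\kappa(\frakp)$-vector space $V$ containing $\kappa(\frakp)$, and for any $v\in V$ the subspace $\kappa(\frakp)v$ must meet the essential submodule $\kappa(\frakp)$ nontrivially, forcing $v\in\kappa(\frakp)$, so $V=\kappa(\frakp)$; note this is precisely the fact $\Hom_{R_\frakp}(\kappa(\frakp),E(R/\frakp))\cong\kappa(\frakp)$ that the paper's one-line proof also uses without comment.
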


\begin{proof}
The first statement is well known. For the the second statement, we observe that there are the natural isomorphisms  
$$\Hom_R(-,\kappa(\frakp))\cong\Hom_R(-,\Hom_{R_\frakp}(\kappa(\frakp),E(R/\frakp)))\cong \Hom_{R_\frakp}(-\otimes_R\kappa(\frakp),E(R/\frakp)),$$ so we have the required isomorphism. 
\end{proof}

\begin{corollary}\label{p-nin-V}
Let $\CG$ be a Gabriel filter of finite type, and $\frakp\in\Spec(R)$. Then the following are equivalent: \begin{enumerate}[{\rm (i)}] 
\item $\frakp\notin V_\CG$;
\item $\kappa(\frakp)\in\CF_\CG$;
\item $R/\frakp\in\CF_\CG$.
\end{enumerate}

In particular, if $\sigma$ is a $2$-term silting complex, and $\CG$ is the induced Gabriel filter, then $\CF_{\CG}=\CT_\sigma$, and $V_\CG=V_\sigma$.
\end{corollary}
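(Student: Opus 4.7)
The plan is to first establish the equivalence of (i), (ii), (iii), and then derive the two identifications in the ``In particular'' clause from the bijective correspondence in Theorem \ref{main-corr}.

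For (i)$\Leftrightarrow$(ii), I would unpack both statements in terms of a cofinal set $\CB$ of finitely generated ideals in $\CG$: condition (i) says $R/I\otimes_R\kappa(\frakp)=0$ for every $I\in\CB$, while (ii) says $\Hom_R(R/I,\kappa(\frakp))=0$ for every $I\in\CB$. The natural isomorphism supplied by Lemma \ref{nat-kp}(ii),
$$\Hom_R(R/I,\kappa(\frakp))\;\cong\;\Hom_{R_\frakp}\bigl(R/I\otimes_R\kappa(\frakp),\,E(R/\frakp)\bigr),$$
together with the fact from Lemma \ref{nat-kp}(i) that $E(R/\frakp)$ is an injective cogenerator over $R_\frakp$, makes the right-hand side vanish exactly when $R/I\otimes_R\kappa(\frakp)=0$. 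Quantifying over $I\in\CB$ gives the equivalence of (i) and (ii).

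For (ii)$\Leftrightarrow$(iii), the implication (ii)$\Rightarrow$(iii) is immediate since $R/\frakp$ embeds into its field of fractions $\kappa(\frakp)$ and $\CF_\CG$ is closed under submodules. For the reverse, I would write $\kappa(\frakp)$ as the filtered direct limit of its cyclic $R$-submodules $\tfrac{1}{s}(R/\frakp)$ for $s\in R\setminus\frakp$, each of which is isomorphic to $R/\frakp$; then closure of $\CF_\CG$ under direct limits, which follows from the fact that $\CF_\CG$ is the torsion-free class of a hereditary torsion theory of finite type (equivalently, since $R/I$ is finitely presented for $I\in\CB$, the functors $\Hom_R(R/I,-)$ commute with direct limits), forces $\kappa(\frakp)\in\CF_\CG$.

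For the ``In particular'' statement, when $\CG$ is the Gabriel filter induced by the silting complex $\sigma$, Lemma \ref{filtre} identifies $\CG$ with $\CG_\sigma$, and the bijective correspondence of Theorem \ref{main-corr} matches $\sigma$ with the hereditary torsion pair $(\CA_\sigma,\CT_\sigma)$ whose torsion-free class on the Gabriel filter side is precisely $\CF_\CG$; this yields $\CF_\CG=\CT_\sigma$. Then, since $\kappa(\frakp)\in\CT_\sigma$ is equivalent to $\sigma\otimes_R\kappa(\frakp)$ being a monomorphism, the already established equivalence (i)$\Leftrightarrow$(ii) gives $V_\CG=V_\sigma$. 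The main potential obstacle is (iii)$\Rightarrow$(ii), where one must invoke a specific stability property of $\CF_\CG$ beyond what a generic torsion-free class provides; the ``finite type'' hypothesis on $\CG$ is exactly what makes this step go through.
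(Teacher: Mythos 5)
Your proof is correct, and its overall structure coincides with the paper's: (i)$\Leftrightarrow$(ii) via Lemma \ref{nat-kp} (the isomorphism $\Hom_R(R/I,\kappa(\frakp))\cong\Hom_{R_\frakp}(R/I\otimes_R\kappa(\frakp),E(R/\frakp))$ plus $E(R/\frakp)$ being an injective cogenerator over $R_\frakp$), (ii)$\Rightarrow$(iii) by closure under submodules, and the ``in particular'' clause by the mutually inverse correspondences of Theorem \ref{main-corr} together with the equivalence (i)$\Leftrightarrow$(ii). The one place where you genuinely deviate is (iii)$\Rightarrow$(ii): the paper uses that $\CF_\CG$ is the torsion-free class of a \emph{hereditary} torsion theory, hence closed under injective envelopes, so $R/\frakp\in\CF_\CG$ gives $E(R/\frakp)\in\CF_\CG$ and then $\kappa(\frakp)\in\CF_\CG$ as a submodule of $E(R/\frakp)$; you instead write $\kappa(\frakp)$ as the directed union of the submodules $\tfrac{1}{s}(R/\frakp)\cong R/\frakp$, $s\in R\setminus\frakp$, and use closure under direct limits (valid, since each $R/I$ with $I\in\CB$ finitely generated is finitely presented). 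Both arguments are sound; note only that your closing remark that the finite-type hypothesis is ``exactly'' what makes this step work is a bit too strong, since the paper's injective-envelope argument needs only hereditariness, not finite type.
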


\begin{proof}
The equivalence (i)$\Leftrightarrow$(ii) follows from Lemma \ref{nat-kp}, while (ii)$\Leftrightarrow$(iii) is true since $\CF_\CG$ is closed under submodules and injective envelopes.  

If $\sigma$ is a $2$-term silting complex, the equality $\CF_{\CG}=\CT_\sigma$ is true since $\CG$ is the Gabriel filter associated to $\CT_\sigma$. The second equality follows from the equivalence (i)$\Leftrightarrow$(ii).
\end{proof}

\begin{lemma}\label{Csigma} Let $\lambda:R\to S$ be a homomorphism of commutative rings. If $P^{-1}$ and $P^0$ are projective $R$-modules and $\sigma:P^{-1}\to P^0$ is a homomorphism, we denote by $\CT_{\sigma\otimes_R S}\subseteq \Modr S$ the class associated to the $S$-homomorphism $\sigma\otimes_R S$. The following are true:
\begin{enumerate}[{\rm (i)}]
 \item $\CT_{\sigma\otimes_R S}=\CT_\sigma\cap\Modr S$.
\item Suppose that $\lambda$ is faithfully flat. For a module $M\in \Modr R$ we have $M\in\CT_\sigma$ if and only if $M\otimes_R S\in\CT_{\sigma\otimes_R S}$.  
 \end{enumerate}
\end{lemma}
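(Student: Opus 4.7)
The plan is to exploit the natural isomorphisms coming from associativity and change-of-rings for tensor products, together with the fact that $-\otimes_R S$ preserves monomorphisms when $\lambda$ is flat and both preserves and reflects them when $\lambda$ is faithfully flat.

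For (i), let $M$ be a right $S$-module. The canonical map $P\otimes_R M\to (P\otimes_R S)\otimes_S M$, $p\otimes m\mapsto (p\otimes 1)\otimes m$, is an isomorphism of $S$-modules, natural in the $R$-module $P$. Applying this naturality to $\sigma:P^{-1}\to P^0$ produces a commutative square that identifies the $R$-morphism $\sigma\otimes_R M$ with the $S$-morphism $(\sigma\otimes_R S)\otimes_S M$. Hence each one is a monomorphism exactly when the other is, which gives $\CT_{\sigma\otimes_R S}=\CT_\sigma\cap\Modr S$.

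For (ii), the corresponding tool is the natural $S$-isomorphism
$$(\sigma\otimes_R M)\otimes_R S\cong (\sigma\otimes_R S)\otimes_S (M\otimes_R S),$$
obtained again from associativity of the tensor product. Suppose $M\in\CT_\sigma$, so $\sigma\otimes_R M$ is a monomorphism in $\Modr R$; flatness of $\lambda$ transports this to a monomorphism after $-\otimes_R S$, and the displayed isomorphism then says $M\otimes_R S\in\CT_{\sigma\otimes_R S}$. Conversely, if $M\otimes_R S\in\CT_{\sigma\otimes_R S}$ then $(\sigma\otimes_R M)\otimes_R S$ is a monomorphism of $S$-modules, and faithful flatness of $\lambda$ forces $\sigma\otimes_R M$ itself to be a monomorphism, i.e.\ $M\in\CT_\sigma$.

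I do not anticipate any real obstacle: the argument is essentially a diagram chase using standard tensor manipulations and the defining reflection property of faithful flatness. The only point requiring attention is to verify that the associativity isomorphisms invoked are natural in the $R$-module variable, so that they really commute with $\sigma\otimes_R -$ and hence transport the property of being a monomorphism between the two sides.
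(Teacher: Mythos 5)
Your proof is correct and takes essentially the same route as the paper: part (i) is the same identification via the natural isomorphism $\sigma\otimes_R S\otimes_S M\cong\sigma\otimes_R M$, and the forward implication in (ii) is the same flatness argument. The only real difference is in the converse of (ii): the paper embeds $M$ as a submodule of $\lambda^\star(M\otimes_R S)$ (using faithful flatness for the embedding) and then invokes closure of $\CT_\sigma$ under submodules, whereas you appeal directly to the fact that $-\otimes_R S$ reflects monomorphisms when $\lambda$ is faithfully flat. Both are standard consequences of faithful flatness, so this is a matter of taste rather than substance; your version is marginally more self-contained since it does not rely on the earlier observation that $\CT_\sigma$ is closed under submodules.
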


\begin{proof}
(i) This follows by using the natural isomorphisms $\sigma\otimes_R S\otimes_S M\cong \sigma\otimes_R M$ for all $M\in \Modr S$.

(ii) Suppose that $M\in \CT_\sigma$. Then $\sigma\otimes_R M$ is monic. Since $S$ is flat, it follows that $\sigma\otimes_R M\otimes_R S$ is monic, hence $\lambda^\star(M\otimes_R S)\in \CT_\sigma$. By (i) we obtain $M\otimes_R S\in \CT_{\sigma\otimes_R S}$. 

Conversely, suppose that $M\otimes_R S\in \CT_{\sigma\otimes_R S}$. Then $\lambda^\star(M\otimes _R S)\in \CT_\sigma$. Since $S$ is faithfully flat we know that $M$ can be embedded as a submodule of $\lambda^\star(M\otimes S)$, hence $M\in\CT_\sigma$. 
\end{proof}

In the following, we will use, as in \cite{Br_Ze:2014} the notation $\Def_\sigma=\Coker(\Hom_R(\sigma,-)):\Modr R\to Ab$. Therefore, for every homomorphism $\sigma$ we have $\CALD_\sigma=\{M\in\Modr R\mid \Def_\sigma(M)=0\}$. 

\begin{lemma}\label{transfer}
Let $\lambda:R\to S$ be a homomorphism of commutative rings. If $L^{-1}$ and $L^0$ are finitely generated and projective $R$-modules and $\sigma:L^{-1}\to L^0$ is an $R$-morphism then we have a natural isomorphism 
 $$\Def_{\sigma\otimes_R S}(-\otimes_R S)\cong \Def_{\sigma}(-)\otimes_R S.$$
\end{lemma}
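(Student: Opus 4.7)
The plan is to reduce the statement to the standard compatibility between $\Hom$ and base change for finitely generated projective modules, combined with the tensor-hom adjunction along $\lambda$, and then to take cokernels.

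First, I would establish, for $L$ a finitely generated projective right $R$-module and $M$ a right $R$-module, a natural isomorphism
$$\eta_{L,M}: \Hom_R(L,M)\otimes_R S \;\xrightarrow{\;\cong\;}\; \Hom_S(L\otimes_R S,\, M\otimes_R S),$$
natural in both $L$ and $M$. For $L = R$ this reduces to the identification $M\otimes_R S \cong \Hom_S(S, M\otimes_R S)$, hence it holds for $L = R^n$; since $\Hom_R(-,M)\otimes_R S$ and $\Hom_S(-\otimes_R S,\,M\otimes_R S)$ are both additive contravariant functors, the isomorphism extends to any finitely generated projective $L$ by splitting it off a free module of finite rank. (Alternatively one can factor $\eta_{L,M}$ as the composite of the standard map $\Hom_R(L,M)\otimes_R S \to \Hom_R(L, M\otimes_R S)$, which is iso for $L$ finitely generated projective, with the adjunction isomorphism $\Hom_R(L,\lambda^\star(M\otimes_R S))\cong \Hom_S(L\otimes_R S,\,M\otimes_R S)$.)

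Next, applying $\eta$ naturally in $L$ to the morphism $\sigma: L^{-1} \to L^{0}$ yields a commutative diagram
$$\xymatrix{
\Hom_R(L^0,M)\otimes_R S \ar[rr]^{\Hom_R(\sigma,M)\otimes_R S}\ar[d]^{\eta_{L^0,M}}_\cong && \Hom_R(L^{-1},M)\otimes_R S\ar[d]^{\eta_{L^{-1},M}}_\cong\\
\Hom_S(L^0\otimes_R S,M\otimes_R S) \ar[rr]^{\Hom_S(\sigma\otimes_R S,\,M\otimes_R S)} && \Hom_S(L^{-1}\otimes_R S,M\otimes_R S).
}$$
Both vertical arrows are isomorphisms and both squares are natural in $M\in\Modr R$.

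Finally, I would take cokernels of the two horizontal maps. Because $-\otimes_R S$ is right exact, the cokernel of the top row is $\Coker(\Hom_R(\sigma,M))\otimes_R S = \Def_\sigma(M)\otimes_R S$, while by definition the cokernel of the bottom row is $\Def_{\sigma\otimes_R S}(M\otimes_R S)$. The induced isomorphism on cokernels gives the desired natural isomorphism $\Def_{\sigma\otimes_R S}(-\otimes_R S)\cong \Def_\sigma(-)\otimes_R S$. The whole argument is essentially formal; the only point requiring care is the verification of the naturality of $\eta_{L,M}$ in $L$ (so that the square above commutes), which is why the finite generation and projectivity of $L^{-1}, L^{0}$ is used.
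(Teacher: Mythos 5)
Your proof is correct and follows essentially the same route as the paper: the whole lemma rests on the natural base-change isomorphism $\Hom_R(L,-)\otimes_R S\cong\Hom_S(L\otimes_R S,-\otimes_R S)$ for the finitely generated projective modules $L^{-1},L^0$, applied to $\sigma$ and then passed to cokernels via right exactness of $-\otimes_R S$. The only difference is cosmetic: the paper cites this isomorphism (Matsumura, Theorem 7.11, for finitely presented modules), whereas you verify it directly by reduction to $L=R$ and splitting off a finite free module, which if anything removes any flatness caveat attached to the citation.
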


\begin{proof}
This is a consequence of the fact that for every finitely presented $R$-module $L$, there exists a natural isomorphism between the functors  $\Hom_S(L\otimes_RS,-\otimes_RS)$ and $\Hom_R(L,-)\otimes_RS$ (see \cite[Theorem 7.11]{Mats-comm}).  
\end{proof}

\begin{lemma}\label{var2.3}
Suppose that $\lambda:R\to S$ is a homomorphism of commutative rings and that $\sigma$ is a $2$-term silting complex. Then
\begin{enumerate}[{\rm (a)}]
 \item $\CALD_\sigma\otimes_R S\subseteq \CALD_{\sigma\otimes_R S}$;
 \item if $\lambda$ is faithfully flat and $N\in\Modr R$ then $N\in\CALD_\sigma$ if and only if $N\otimes_R S\in \CALD_{\sigma\otimes_R S}$.
\end{enumerate}
\end{lemma}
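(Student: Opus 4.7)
My plan is to treat (a) by exploiting commutativity together with the torsion-class structure of $\CALD_\sigma$, and to treat (b) via the Gabriel filter characterization of silting classes furnished by Theorem~\ref{main-corr} and Lemma~\ref{filtre}.

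For (a), I would argue as in the proof of Theorem~\ref{commutative-1}. Picking any $R$-generating family for $S$ produces a surjection $\alpha\colon R^{(I)}\to S$ that is automatically $R$-linear because both rings are commutative. Tensoring with an arbitrary $N\in\CALD_\sigma$ yields an $R$-epimorphism $N^{(I)}\cong N\otimes_R R^{(I)}\twoheadrightarrow N\otimes_R S$. Since $\CALD_\sigma$ is a torsion class, hence closed under coproducts and epimorphic images, this forces $\lambda^\star(N\otimes_R S)\in\CALD_\sigma$, and Lemma~\ref{basic1} then gives $N\otimes_R S\in\CALD_{\sigma\otimes_R S}$.

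For (b), the implication $(\Rightarrow)$ is exactly (a). For $(\Leftarrow)$, suppose $\lambda$ is faithfully flat and $N\otimes_R S\in\CALD_{\sigma\otimes_R S}$. By Lemma~\ref{basic1} the $R$-module $\lambda^\star(N\otimes_R S)$ lies in $\CALD_\sigma$. Choose a cofinal set $\CB\subseteq\CG_\sigma$ of finitely generated ideals; combining Theorem~\ref{main-corr} with Lemma~\ref{filtre} gives $\CALD_\sigma=\bigcap_{I\in\CB}\Ker(-\otimes_R R/I)$. For each $I\in\CB$, using the standard identification of iterated tensor products,
$$0=(N\otimes_R S)\otimes_R R/I\cong (N\otimes_R R/I)\otimes_R S.$$
Faithful flatness of $\lambda$ then forces $N\otimes_R R/I=0$ for every $I\in\CB$, i.e., $N\in\CALD_\sigma$.

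The only delicate point is recognizing that $\CALD_\sigma$ can be described as the intersection of the kernels $\Ker(-\otimes_R R/I)$ over a \emph{cofinal} family of finitely generated ideals in $\CG_\sigma$, rather than over the whole filter; but this cofinality is already built into the Gabriel-filter-of-finite-type correspondence recalled in Theorem~\ref{main-corr}, so no genuine obstacle is anticipated. The commutativity hypothesis enters in both parts, explicitly in (a) through the $R$-linearity of the chosen surjection onto $S$, and implicitly in (b) through Theorem~\ref{main-corr}.
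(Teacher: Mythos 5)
Your proof is correct, and for part (a) it is essentially the paper's argument: the paper also reduces to the fact that $\lambda^\star(N\otimes_R S)$ is an epimorphic image of a coproduct of copies of $N$ (the same bimodule trick as in Theorem~\ref{commutative-1}), phrased via $T$-generation and Theorem~\ref{thm-1} rather than via the torsion-class closure properties and Lemma~\ref{basic1}, which is only a cosmetic difference. For part (b), however, you take a genuinely different route for the converse implication. The paper invokes the fact that every silting class is of finite type to write $\CALD_\sigma=\bigcap_{i\in I}\CALD_{\sigma_i}$ with each $\sigma_i$ a map between finitely generated projectives, and then uses the base-change isomorphism for the defect functor (Lemma~\ref{transfer}), $\Def_{\sigma_i}(N)\otimes_R S\cong \Def_{\sigma_i\otimes_R S}(N\otimes_R S)=0$, concluding by faithful flatness. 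You instead use the silting class/Gabriel filter correspondence (Theorem~\ref{main-corr} together with Lemma~\ref{filtre}, whose proof indeed contains the identity $\CALD_\sigma=\bigcap_{I\in\CG_\sigma}\Ker(-\otimes_R R/I)$, and the cofinal finitely generated subfamily is not even strictly needed for your direction), then base-change $(N\otimes_R S)\otimes_R R/I\cong(N\otimes_R R/I)\otimes_R S$ and faithful flatness. Both arguments ultimately rest on the Angeleri-H\"ugel--Hrbek results; the paper's version stays closer to the $2$-term-complex formalism and makes Lemma~\ref{transfer} do the work, whereas yours leans on the classification of silting classes over commutative rings, which the paper recalls anyway and uses later in the proof of Theorem~\ref{silting}, so there is no circularity. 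Your version buys a slightly more concrete computation with cyclic modules $R/I$; the paper's buys uniformity (it does not need the Thomason/Gabriel-filter machinery at this point, only finite type).
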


\begin{proof}
(a) If $N\in \CALD_\sigma$ then $N\otimes S$ is $T$-generated. Since it is an $S$-module we obtain $N\otimes_R S\in \CALD_{\sigma\otimes_R S}$ by Theorem \ref{thm-1}.

(b) Suppose that $N\otimes_R S\in \CALD_{\sigma\otimes_R S}$. It was proved in \cite[Theorem 2.3]{An-Hr} and in \cite[Theorem 6.3]{Ma-Sto:2015} that every silting class is of finite type. Therefore, there exists a family $\sigma_i$, $i\in I$, of homomorphisms between finitely generated projective $R$-modules such that $\CALD_\sigma=\bigcap_{i\in I}\CALD_{\sigma_i}$. 
By Lemma \ref{basic1} it follows that $N\otimes_R S\in \CALD_\sigma$.  
Using the proof of Lemma \ref{basic1} and Lemma \ref{transfer}, we observe that for every $i\in I$ we have the isomorphisms $$0=\Def_{\sigma_i}(N\otimes S)\cong \Def_{\sigma_i\otimes_R S}(N\otimes_R S)\cong \Def_{\sigma_i}(N)\otimes_R S.$$ Since $S$ is faithfully flat, it follows that $N\in\CALD_{\sigma_i}$ for all $i\in I$, so $N\in\CALD_\sigma$.
\end{proof}

\begin{remark}
We recall from \cite{AMV:2015} that if $\sigma$ is $2$-term silting complex then it is a \textsl{generator} in $\BD(R)$, i.e. the smallest triangulated subcategory which contains $\sigma$ and is closed under direct sums is $\BD(R)$. 

Moreover, it was proved in \cite[Theorem 3.14]{An-18} (see also \cite[Remark 2.7]{AMV:2015}) that an object in $\BD(R)$ is a generator if and only if it has the property: for every $Y\in \BD(R)$, from $\Hom(X,Y[i])=0$ for all $i\in \mathbb{Z}$ it follows that $Y=0$. 
\end{remark}

For the proof of the following lemma we use the same techniques as those used in the proof of \cite[Lemma 4.12]{Hr-St-Tr}. 

\begin{lemma}\label{generator}
Let $\sigma:P^{-1}\to P^{0}$ be a homomorphism between projective $R$-modules with $\Coker(\sigma)=T$. If the complex (concentrated in $-1$ and $0$) which is induced by $\sigma$ is a generator for $\mathbf{D}(R)$ then for every $\mathfrak{p}\in\Spec(R)$ we have $T\otimes_R\kappa(\frakp)\neq 0$ or $\Ker(\sigma\otimes_R\kappa(\frakp))\neq 0$.
\end{lemma}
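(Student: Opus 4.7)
The plan is to argue by contradiction, assuming $\frakp\in\Spec(R)$ satisfies both $T\otimes_R\kappa(\frakp)=0$ and $\Ker(\sigma\otimes_R\kappa(\frakp))=0$, so that $\sigma\otimes_R\kappa(\frakp)$ is an isomorphism of $\kappa(\frakp)$-vector spaces. I will then invoke the generator characterization recalled in the remark immediately preceding the lemma, applied to $Y=\kappa(\frakp)$ viewed as a complex concentrated in degree $0$: it suffices to show that $\Hom_{\BD(R)}(\sigma,\kappa(\frakp)[i])=0$ for every $i\in\BBZ$, since this would force $\kappa(\frakp)=0$, contradicting the fact that $\kappa(\frakp)$ is a field.

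To compute these Hom groups, I would use that $\sigma$ is a bounded complex of projective $R$-modules, so $\Hom_{\BD(R)}(\sigma,\kappa(\frakp)[i])$ coincides with $H^i$ of the two-term complex
$$0\to \Hom_R(P^0,\kappa(\frakp))\xrightarrow{\sigma^\ast}\Hom_R(P^{-1},\kappa(\frakp))\to 0$$
placed in degrees $0$ and $1$. In particular the Hom group vanishes automatically for $i\neq 0,1$, so only the cases $i=0$ and $i=1$ remain.

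The key step is to apply the natural isomorphism from Lemma \ref{nat-kp} termwise, which identifies the above complex with
$$0\to \Hom_{R_\frakp}(P^0\otimes_R\kappa(\frakp),E(R/\frakp))\xrightarrow{(\sigma\otimes_R\kappa(\frakp))^\ast}\Hom_{R_\frakp}(P^{-1}\otimes_R\kappa(\frakp),E(R/\frakp))\to 0.$$
Naturality (in the first variable) of the isomorphism in Lemma \ref{nat-kp} is needed here to recognize the differential as the $E(R/\frakp)$-dual of $\sigma\otimes_R\kappa(\frakp)$; this is the one technical point where I would need to be careful. Once the identification is in place, the assumption that $\sigma\otimes_R\kappa(\frakp)$ is an isomorphism implies that its $E(R/\frakp)$-dual is an isomorphism as well, so both $H^0$ and $H^1$ of the Hom complex vanish.

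Combining everything, $\Hom_{\BD(R)}(\sigma,\kappa(\frakp)[i])=0$ for all $i\in\BBZ$, and the generator property of $\sigma$ forces $\kappa(\frakp)=0$, a contradiction. Hence one of $T\otimes_R\kappa(\frakp)$ or $\Ker(\sigma\otimes_R\kappa(\frakp))$ must be nonzero. The main obstacle is the bookkeeping in the identification of the Hom complex via Lemma \ref{nat-kp}; the rest of the argument is a short diagram chase in the derived category.
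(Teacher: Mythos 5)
Your argument is correct and is essentially the paper's own proof, just run as a contradiction instead of directly: both use the generator characterization via vanishing of $\Hom_{\BD(R)}(\sigma,\kappa(\frakp)[i])$, compute these groups from the two-term Hom complex (legitimate since $\sigma$ is a bounded complex of projectives), and transfer between $\Hom_R(\sigma,\kappa(\frakp))$ and $\sigma\otimes_R\kappa(\frakp)$ via the natural isomorphism of Lemma \ref{nat-kp}, whose stated naturality already covers the one technical point you flag.
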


\begin{proof}
Since $\sigma$ is a generator for $\mathbf{D}(R)$, for every $R$-module $M$ there exists $i\in\{0,1\}$ such that $\Hom_{\mathbf{D(R)}}(\sigma,M[i])=\Hom_{\mathbf{K(R)}}(\sigma,M[i])\neq 0$. It follows that $\Hom(\sigma,M)$ is not an isomorphism for all $R$-modules $M$.

If $\frakp\in \Spec(R)$, we take $M=\kappa(\frakp)=\Hom_{R_\frakp}(\kappa(\frakp),E(R/\frakp))$, where $E(R/\frakp)$ is the injective envelope of $\kappa(\frakp)$ in $\Modr R_\frakp$. Using Lemma \ref{nat-kp} we obtain that $\Hom_R(\sigma,\kappa(\frakp))$ is not an isomorphism if and only if $\sigma\otimes_R\kappa(\frakp)$ is not an isomorphism.
\end{proof}

We recall from \cite{silver} and \cite{staks-04VM} some basic facts about ring epimorphisms.

\begin{lemma}\label{epimorfisms}
Let $R$ be a commutative ring and $\delta:R\to B$ a ring epimorphism. Then
\begin{enumerate}[{\rm (i)}]
 \item $B$ is commutative;
 \item the canonical map $\delta^\star:\Spec(B)\to \Spec(R)$ is injective;
 \item for every $\frakq\in \Spec(B)$, $\kappa(\frakq)=\kappa(\delta^\star(\frakq))$.
\end{enumerate}
\end{lemma}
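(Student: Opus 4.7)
The plan is to use the standard characterization that a ring homomorphism $\delta\colon R\to B$ is an epimorphism if and only if the multiplication map $\mu\colon B\otimes_R B\to B$, $b\otimes b'\mapsto bb'$, is an isomorphism (with $B\otimes_R B$ formed using the right $R$-action on the first factor and the left $R$-action on the second). From this single fact, (i), (ii), and (iii) all reduce to short computations, either in $B\otimes_R B$ or in the fiber rings $B\otimes_R\kappa(\frakp)$.

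For (i), I would observe that $\mu(b\otimes 1)=b=\mu(1\otimes b)$, so injectivity of $\mu$ forces $b\otimes 1=1\otimes b$ in $B\otimes_R B$ for every $b\in B$. The natural ring structure on $B\otimes_R B$ then yields
\[
b\otimes b'=(b\otimes 1)(1\otimes b')=(1\otimes b)(b'\otimes 1)=b'\otimes b,
\]
and applying $\mu$ gives $bb'=b'b$, so $B$ is commutative.

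With (i) in hand, both (ii) and (iii) can be read off the fiber rings. For $\frakp\in\Spec(R)$ the fiber of $\delta^\star$ over $\frakp$ is $\Spec(B\otimes_R\kappa(\frakp))$, and tensoring $\delta$ with $\kappa(\frakp)$ produces a ring epimorphism $\kappa(\frakp)\to B\otimes_R\kappa(\frakp)$. A ring epimorphism $K\to A$ out of a field $K$ is either the zero map (so $A=0$) or an isomorphism: the identity $a\otimes 1=1\otimes a$ in $A\otimes_K A$, expanded in a $K$-basis of $A$ containing $1$, forces every $a\in A$ to lie in $K\cdot 1$. Hence each fiber of $\delta^\star$ is either empty or a single point, proving (ii). When $\frakq$ lies over $\frakp$, the isomorphism $\kappa(\frakp)\cong B\otimes_R\kappa(\frakp)$ identifies $\kappa(\frakp)$ with the residue field at the unique prime of the fiber; a short localization argument (factor $R_\frakp\to B_\frakq$ through the local ring $B_\frakp$ and use that $B\otimes_R\kappa(\frakp)=\kappa(\frakp)$ is already a field whose maximal ideal corresponds to $\frakq$) then yields $\kappa(\frakp)=\kappa(\frakq)$, which is (iii).

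The only delicate point — more a matter of bookkeeping than a real obstacle — is that part (i) must be carried out without commutativity of $B$ being available in advance, so the argument relies on the $R$-bimodule version of the epimorphism characterization $B\otimes_R B\cong B$. This is not truly problematic, since the bimodule tensor product and its ring structure are well defined for any ring homomorphism out of a commutative ring, and the computation $b\otimes b'=b'\otimes b$ above uses only the relation $b\otimes 1=1\otimes b$ together with the multiplicative structure of $B\otimes_R B$.
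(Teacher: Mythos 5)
Your reduction of parts (ii) and (iii) to the fibre rings $B\otimes_R\kappa(\frakp)$ is correct and standard, but your argument for (i) has a genuine gap. The componentwise multiplication $(b_1\otimes b_2)(b_1'\otimes b_2')=b_1b_1'\otimes b_2b_2'$ on $B\otimes_R B$ is well defined only when $\delta(R)$ is central in $B$: well-definedness forces $b_1\delta(r)b_1'\otimes b_2b_2'=b_1b_1'\otimes \delta(r)b_2b_2'$, and the balancing relations of the tensor product only let you move $\delta(r)$ across the tensor sign when it is adjacent to it, not past $b_1'$. Since centrality of $\delta(R)$ in the a priori noncommutative ring $B$ is essentially what has to be proved, the claim in your last paragraph that the ring structure is available ``for any ring homomorphism out of a commutative ring'' is false, and the computation $b\otimes b'=(b\otimes 1)(1\otimes b')=\cdots=b'\otimes b$ is not licensed. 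Note also that the $B$-bimodule structure alone does not suffice: from $b\otimes 1=1\otimes b$ one only gets $b\otimes b'=(1\otimes b)\cdot b'=1\otimes bb'$ and $b\otimes b'=b\cdot(b'\otimes 1)=bb'\otimes 1$, i.e.\ $bb'\otimes 1=1\otimes bb'$, which carries no information about $b'b$.

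The step can be repaired while keeping your starting point: the equivalence of ``$\delta$ is an epimorphism'' with $b\otimes 1=1\otimes b$ for all $b$ (equivalently, $\mu$ bijective) does hold for noncommutative $B$. Fix $b'\in B$ commuting with every element of $\delta(R)$; then $x\otimes y\mapsto xb'y$ is a well-defined map $B\otimes_R B\to B$ (the required relation $x\delta(r)b'y=xb'\delta(r)y$ holds by the choice of $b'$), and evaluating it on $b\otimes 1=1\otimes b$ gives $bb'=b'b$, so $b'$ lies in the center $Z(B)$. Since $R$ is commutative, each $\delta(r)$ commutes with $\delta(R)$, hence $\delta(R)\subseteq Z(B)$; but then every $b'\in B$ commutes with $\delta(R)$, and the same argument gives $B=Z(B)$, i.e.\ $B$ is commutative. (Equivalently, one can use the trivial-extension argument: an epimorphism kills every derivation $B\to M$ vanishing on $\delta(R)$, applied to inner derivations.) With (i) secured, your treatment of (ii) and (iii) via the base-changed epimorphism $\kappa(\frakp)\to B\otimes_R\kappa(\frakp)$ and the fact that a ring epimorphism out of a field is zero or an isomorphism is fine; for comparison, the paper offers no proof at all here, simply citing Silver and the Stacks project.
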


\begin{proposition}\label{ps-s-k(p)}
Let $\sigma:P^{-1}\to P^{0}$ be a homomorphism between projective $R$-modules. The following are equivalent:
\begin{enumerate}[{\rm (1)}]
 \item $\sigma$ is a $2$-term silting complex;
 \item \begin{enumerate}[{\rm (i)}] 
 \item $\sigma$ is partial silting,
 \item for every $\mathfrak{p}\in\Spec(R)$ we have $T\otimes_R\kappa(\frakp)\neq 0$ or $\Ker(\sigma\otimes_R\kappa(\frakp))\neq 0$.
 \end{enumerate}
\end{enumerate}
\end{proposition}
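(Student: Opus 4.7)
For $(1)\Rightarrow(2)$: a silting complex is partial silting by definition, and by the Remark preceding the statement it is a generator in $\BD(R)$, so Lemma \ref{generator} gives (ii).

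For $(2)\Rightarrow(1)$, assume $\sigma$ is partial silting and (ii) holds; I need to show $\CALD_\sigma=\Gen(T)$. Since $\sigma$ is partial silting, $(\Gen(T),T^{\circ})$ is a torsion pair with $\Gen(T)\subseteq\CALD_\sigma$. For any $M\in\CALD_\sigma$, the torsion-pair sequence $0\to t(M)\to M\to M/t(M)\to 0$ has $M/t(M)\in T^{\circ}$, and by the quotient-closure of the torsion class $\CALD_\sigma$ it lies in $\CALD_\sigma$ as well. Thus the task reduces to proving $\CALD_\sigma\cap T^{\circ}=0$.

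A module $N$ belongs to $\CALD_\sigma\cap T^{\circ}$ precisely when $\Hom_R(\sigma,N)$ is an isomorphism, since its kernel is $\Hom_R(T,N)$ and its cokernel is $\Def_\sigma(N)$. Lemma \ref{nat-kp} gives the natural isomorphism
$$\Hom_R(\sigma,\kappa(\frakp))\cong\Hom_{R_\frakp}(\sigma\otimes_R\kappa(\frakp),E(R/\frakp)),$$
and since $E(R/\frakp)$ is an injective cogenerator for $\Modr R_\frakp$, the left-hand Hom is an isomorphism exactly when $\sigma\otimes_R\kappa(\frakp)$ is. Hypothesis (ii) then rules out $\kappa(\frakp)\in\CALD_\sigma\cap T^{\circ}$ for every $\frakp\in\Spec(R)$.

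The main obstacle is upgrading ``no residue field lies in $\CALD_\sigma\cap T^{\circ}$'' to ``no nonzero module does.'' My plan is by contradiction: given a hypothetical nonzero $N\in\CALD_\sigma\cap T^{\circ}$, choose $\frakp\in\Spec(R)$ with $N\otimes_R\kappa(\frakp)\neq 0$ and exploit the natural identifications $\Hom_R(P^j,N)\otimes_R\kappa(\frakp)\cong\Hom_R(P^j,N\otimes_R\kappa(\frakp))$ and $\Hom_R(P^j,\kappa(\frakp)^{(J)})\cong\Hom_R(P^j,\kappa(\frakp))^{(J)}$ (available when $P^{-1},P^0$ are finitely generated) to transfer the isomorphism $\Hom_R(\sigma,N)\cong\Hom_R(\sigma,N)$ into an isomorphism of $\Hom_R(\sigma,\kappa(\frakp))$, contradicting the previous paragraph. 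Removing the finite-generation restriction on the projectives $P^{j}$ is the delicate point that I expect to consume most of the work.
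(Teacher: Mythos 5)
Your $(1)\Rightarrow(2)$ and the reduction of $(2)\Rightarrow(1)$ to showing $\CY:=\CALD_\sigma\cap T^{\circ}=0$ are correct and match the paper, as does the observation that a module lies in $\CY$ iff $\Hom_R(\sigma,-)$ applied to it is an isomorphism, together with the use of Lemma~\ref{nat-kp} to exclude the residue fields $\kappa(\frakp)$ from $\CY$. The gap is exactly where you flag it: passing from ``no $\kappa(\frakp)$ lies in $\CY$'' to ``$\CY=0$.'' Your proposed route via $\Hom_R(P^j,N)\otimes_R\kappa(\frakp)\cong\Hom_R(P^j,N\otimes_R\kappa(\frakp))$ and $\Hom_R(P^j,\kappa(\frakp)^{(J)})\cong\Hom_R(P^j,\kappa(\frakp))^{(J)}$ genuinely requires $P^{-1},P^0$ finitely generated, and there is no standing finiteness hypothesis here; ``removing the finite-generation restriction'' is not a technical cleanup but the heart of the difficulty, and the tensor/Hom-commutation identities you invoke simply fail for infinitely generated projectives.

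The paper closes this gap with a different and essential idea: by \cite[Proposition~3.3]{AMV2}, the class $\CY$ (an intersection of a torsion class with a torsion-free class, both closed under products) is bireflective and extension-closed, hence of the form $\CY=\delta^\star(\Modr B)$ for a ring epimorphism $\delta:R\to B$. If $\CY\neq 0$ then $B\neq 0$ and $\Spec(B)\neq\varnothing$; since $\delta$ is a ring epimorphism with $R$ commutative, $B$ is commutative and every residue field $\kappa(\frakq)$ of $B$ equals a residue field $\kappa(\frakp)$ of $R$ (Lemma~\ref{epimorfisms}). Such a $\kappa(\frakp)$ then lies in $\CY$, contradicting what you already proved. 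This replaces the tensor-product transfer you were hoping for with a structural statement about $\CY$ itself, and it is the step your argument is missing. You should incorporate the bireflective-subcategory/ring-epimorphism machinery, or find an alternative argument that does not rely on commuting $\Hom_R(P^j,-)$ past tensor products for possibly infinitely generated $P^j$.
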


\begin{proof} (1)$\Rightarrow$(2) This follows from Lemma \ref{generator}.

(2)$\Rightarrow$(1) Since $\sigma$ is partial silting, then $\Gen(T)\subseteq \CALD_\sigma$ are torsion classes, and the torsion-free class corresponding to $\Gen(T)$ is $\Ker\Hom_R(T,-)$, \cite[Remark 3.8]{AMV:2015}. Therefore, in order to obtain $\Gen(T)=\CALD_\sigma$, it is enough to prove that $\CALD_\sigma\cap \Ker\Hom_R(T,-)=0$. 

Let $\CY=\CALD_\sigma\cap \Ker\Hom_R(T,-)$. By \cite[Proposition 3.3]{AMV2} the class $\CY$ is bireflective and extension closed. Therefore, it induces an epimorphism of rings $\delta:R\to B$, and $\CY$ is the essential image of the restriction of scalars $\delta^\star:\Modr B\to\Modr R$.    

Suppose that $B\neq 0$. Then $\Spec(B)\neq \varnothing$. By Lemma \ref{epimorfisms}  it follows that for every $\frakq\in\Spec(B)$ we can identify $\kappa(\frakq)=\kappa(\frakp)$ for some $\frakp\in\Spec(R)$. Therefore, there exists $\frakp\in\Spec(R)$ such that $\kappa(\frakp)\in\CY$. If $\frakp$ is such an ideal we obtain that $\Hom_R(\sigma,\kappa(\frakp))$ is an isomorphism. Using Lemma \ref{nat-kp}, it follows that $\sigma\otimes_R \kappa(\frakp)$ is an isomorphism, and this contradicts (ii). Then $B=0$, and the proof is complete.  
\end{proof}

The following properties are well known. 

\begin{lemma}\label{p-si-q} Let $\lambda:R\to S$ be a faithfully flat homomorphism of commutative rings. If $\frakq\in\Spec(S)$ and $\frakp=\frakq\cap R$ then there is a natural isomorphism  
$$-\otimes_R S\otimes_S\kappa(\frakq)\cong -\otimes_R \kappa(\frakp)\otimes_{\kappa(\frakp)}\kappa(\frakq).$$ Moreover $\kappa(\frakq)$ is faithfully flat as a $\kappa(\frakp)$-module. 
\end{lemma}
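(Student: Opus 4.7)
The plan is to derive the natural isomorphism as a two-step application of the associativity of tensor products, and to deduce faithful flatness over $\kappa(\frakp)$ from the fact that $\kappa(\frakp)$ is a field.

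First I would verify that $\kappa(\frakq)$ carries a canonical $\kappa(\frakp)$-algebra structure. Since $\frakp=\frakq\cap R$, we have $R\setminus\frakp\subseteq S\setminus\frakq$, so every element of $R\setminus\frakp$ becomes invertible in $S_{\frakq}$; this yields a ring homomorphism $R_\frakp\to S_\frakq$. Because $\frakp\subseteq\frakq$, the maximal ideal $\frakp R_\frakp$ is sent into $\frakq S_\frakq$, and passing to the quotient produces a ring homomorphism $\kappa(\frakp)=R_\frakp/\frakp R_\frakp\to S_\frakq/\frakq S_\frakq=\kappa(\frakq)$ which refines the $R$-module structure on $\kappa(\frakq)$ inherited from $\lambda$.

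Next, for any $R$-module $M$, associativity of the tensor product (and the identification $S\otimes_S N\cong N$ for $S$-modules $N$) gives the natural isomorphisms
$$M\otimes_R S\otimes_S\kappa(\frakq)\;\cong\; M\otimes_R\kappa(\frakq)\;\cong\; M\otimes_R\kappa(\frakp)\otimes_{\kappa(\frakp)}\kappa(\frakq),$$
where the second isomorphism uses the factorization of the $R$-action on $\kappa(\frakq)$ through $\kappa(\frakp)$ established above. Naturality in $M$ is automatic since each step is an instance of a natural isomorphism of bifunctors.

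Finally, for the faithful flatness statement, $\kappa(\frakp)$ is a field, so every $\kappa(\frakp)$-module is free, hence flat; moreover $\kappa(\frakq)$ is a nonzero such module (the map $\kappa(\frakp)\to\kappa(\frakq)$ is a homomorphism of fields, in particular injective), and tensoring a nonzero module with a nonzero free module yields a nonzero module, which gives faithful flatness. The only genuine content is the existence of the map $\kappa(\frakp)\to\kappa(\frakq)$, which is the step I would take most care over; everything else is formal manipulation of tensor products.
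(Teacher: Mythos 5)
Your proof is correct and matches the intended argument: the paper offers no proof of Lemma \ref{p-si-q} at all (it is stated as well known, with the faithful flatness claim later attributed to the proof of \cite[Proposition 3.16]{Hr-St-Tr}), and your two tensor cancellations $M\otimes_R S\otimes_S\kappa(\frakq)\cong M\otimes_R\kappa(\frakq)\cong M\otimes_R\kappa(\frakp)\otimes_{\kappa(\frakp)}\kappa(\frakq)$, based on the induced field homomorphism $\kappa(\frakp)\to\kappa(\frakq)$, are exactly the standard route. The only observation worth adding is that your argument never uses faithful flatness of $\lambda$, so the statement holds for an arbitrary homomorphism of commutative rings with $\frakp=\lambda^{-1}(\frakq)$; the hypothesis is simply carried along from the setting in which the lemma is applied.
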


We recall that we use the notation $$V_\sigma=\{\frakp\in\Spec R\mid \sigma\otimes_R \kappa(\frakp) \textrm{ is not a monomorphism}\}$$ for every homomorphism $\sigma$ between projective modules.

\begin{lemma}\label{lemmaA}
Suppose that $\lambda:R\to S$ is faithfully flat, and $\sigma:P^{-1}\to P^0$ is a homomorphism between projective $R$-modules such that $\sigma\otimes_R S$ is a $2$-term silting complex. Let $\lambda^\star:\Spec(S)\to \Spec(R)$, $\lambda^\star(\frakq)=\frakq\cap R$, be the canonical map. Then
\begin{enumerate}[{\rm (i)}]
\item for an ideal $\frakq\in\Spec(S)$, we have $\kappa(\frakq)\in \CT_{\sigma\otimes S}$ if and only if $\kappa(\lambda^\star(\frakq))\in \CT_\sigma$;   

\item $\lambda^\star (\Spec(S)\setminus V_{\sigma\otimes_R S})=\Spec(R)\setminus V_\sigma$; 

\item $\lambda^\star (V_{\sigma\otimes_R S})=V_\sigma.$
\end{enumerate}
\end{lemma}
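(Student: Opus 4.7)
For part (i), the plan is to exploit the natural isomorphism from Lemma \ref{p-si-q}. Given $\frakq\in\Spec(S)$ with $\frakp=\lambda^\star(\frakq)=\frakq\cap R$, applying that lemma to the two-term complex $\sigma$ gives
$$(\sigma\otimes_R S)\otimes_S\kappa(\frakq)\;\cong\;\sigma\otimes_R\kappa(\frakp)\otimes_{\kappa(\frakp)}\kappa(\frakq).$$
By the very definition of $\CT_{(-)}$, membership in $\CT_{\sigma\otimes_RS}$ of $\kappa(\frakq)$ means the left-hand side is a monomorphism, while $\kappa(\frakp)\in\CT_\sigma$ means $\sigma\otimes_R\kappa(\frakp)$ is a monomorphism. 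Since $\kappa(\frakq)$ is a faithfully flat $\kappa(\frakp)$-module by Lemma \ref{p-si-q}, tensoring with $\kappa(\frakq)$ over $\kappa(\frakp)$ both preserves and reflects monomorphisms; hence the two conditions are equivalent. This is the heart of the argument.

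For parts (ii) and (iii), the plan is to first rephrase (i) as a set-theoretic identity. Unpacking the definitions of $V_\sigma$ and $V_{\sigma\otimes_RS}$, the equivalence in (i) amounts to
$$V_{\sigma\otimes_RS}\;=\;(\lambda^\star)^{-1}(V_\sigma),$$
or equivalently, $\Spec(S)\setminus V_{\sigma\otimes_RS}=(\lambda^\star)^{-1}(\Spec(R)\setminus V_\sigma)$. Next I would invoke the standard fact that $\lambda^\star:\Spec(S)\to\Spec(R)$ is surjective because $\lambda$ is faithfully flat (this is \cite[Section 10.39]{staks-04VM} territory). Applying $\lambda^\star$ to both sides of the two displayed equalities and using surjectivity (which gives $\lambda^\star((\lambda^\star)^{-1}(W))=W$ for any subset $W\subseteq\Spec(R)$) yields (iii) and (ii) respectively.

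I do not expect any serious obstacle here: once Lemma \ref{p-si-q} is in place, part (i) reduces to the trivial observation that a nonzero field extension is faithfully flat, and parts (ii)–(iii) are formal consequences of (i) together with surjectivity of $\lambda^\star$. The only point to be slightly careful about is matching the definitions: a prime $\frakp$ lies outside $V_\sigma$ precisely when $\sigma\otimes_R\kappa(\frakp)$ is monic, which is precisely when $\kappa(\frakp)\in\CT_\sigma$, so the translation between the language of $\CT$ in (i) and the complements of $V$ in (ii)–(iii) is immediate.
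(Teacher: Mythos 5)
Your proposal is correct and follows essentially the same route as the paper: part (i) comes from Lemma \ref{p-si-q} together with the fact that $\kappa(\frakq)$ is faithfully flat over $\kappa(\frakp)$, and parts (ii)--(iii) are obtained by rephrasing (i) as $V_{\sigma\otimes_R S}=(\lambda^\star)^{-1}(V_\sigma)$ and then using surjectivity of $\lambda^\star$ for faithfully flat ring maps.
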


\begin{proof}
(i) If $\frakq\in \Spec(S)$ and $\frakp=\frakq\cap R$, it follows from Lemma \ref{p-si-q} that $\sigma\otimes_R \kappa(\frakp)$ is injective if and only if $\sigma\otimes_R \kappa(\frakp)\otimes_{\kappa(\frakp)}\kappa(\frakq)$ is injective, and this is equivalent to $\sigma\otimes_R S\otimes_S \kappa(\frakq)$ is injective.

(ii) We observe that 
$$V_{\sigma}=\{\frakp\in\Spec(S)\mid \kappa(\frakp)\notin \CT_{\sigma}\}\textrm{ and }
V_{\sigma\otimes_R S}=\{\frakq\in\Spec(S)\mid \kappa(\frakq)\notin \CT_{\sigma\otimes_R S}\}.$$
By (i) it follows that for an ideal $\frakq\in\Spec(S)$ we have $\frakq\notin V_{\sigma\otimes_R S}$ if and only if $\lambda^\star(\frakq)\notin V_\sigma$. 

Since $\lambda^\star$ is surjective, it follows that $\lambda^\star (\Spec(S)\setminus V_{\sigma\otimes_R S})=\Spec(R)\setminus V_\sigma$.

(iii) This follows by using (ii) and the surjectivity of $\lambda^*$.
\end{proof}

In the end of these preliminary considerations we recall the results obtained in \cite{Hr-St-Tr} for the study of the descent of $1$-tilting modules. 

\begin{proposition}\cite[Section 4]{Hr-St-Tr} \label{tilting-desc}
Let $\overline{\lambda}:\overline{R}\to \overline{S}$ be a faithfully flat homomorphism of rings. If $T$ and $V$ are $\overline{R}$-modules such that 
\begin{enumerate}[{\rm (a)}]
 \item $V$ is tilting, 
 \item $T\otimes_{\overline{R}}\overline{S}$ is a tilting $\overline{S}$-module, and
 \item $\Gen(T\otimes_{\overline{R}} \overline{S})=\Gen(V\otimes_{\overline{R}} \overline{S})$
\end{enumerate}
then $T$ is a tilting $\overline{R}$-module and $\Gen(T)=\Gen(V)$. 
\end{proposition}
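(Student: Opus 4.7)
The approach is to first prove $\Gen(T)=\Gen(V)$ by faithfully flat descent, and then deduce from this equality of $\Gen$-classes that $T$ itself is $1$-tilting. Two well-established ingredients drive the argument: (i) every $1$-tilting class $\Gen(W)=W^\perp$ over a commutative ring is of \emph{finite type}, hence equal to $\bigcap_{L\in\CS}\Ker\Ext^1(L,-)$ for a set $\CS$ of finitely presented modules of projective dimension at most one; and (ii) the standard flat base-change isomorphism $\Ext^1_{\overline{R}}(L,N)\otimes_{\overline{R}}\overline{S}\cong\Ext^1_{\overline{S}}(L\otimes\overline{S},N\otimes\overline{S})$ for finitely presented $L$.

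First I would use (i) to write $\Gen(V)=\bigcap_{L\in\CS}\Ker\Ext^1_{\overline{R}}(L,-)$. By (ii) together with the faithful flatness of $\overline{\lambda}$, the vanishing of $\Ext^1_{\overline{R}}(L,N)$ is equivalent to the vanishing of $\Ext^1_{\overline{S}}(L\otimes\overline{S},N\otimes\overline{S})$, giving the descent criterion
\[
N\in\Gen(V)\iff N\otimes_{\overline{R}}\overline{S}\in\bigcap_{L\in\CS}\Ker\Ext^1_{\overline{S}}(L\otimes\overline{S},-).
\]
The right-hand intersection cuts out an $\overline{S}$-tilting class containing $V\otimes\overline{S}$ and, by a comparison of the defining families, coincides with $\Gen(V\otimes\overline{S})$. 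Hypothesis (c) then rewrites the descended condition as $N\otimes\overline{S}\in\Gen(T\otimes\overline{S})$. Running a parallel descent argument from the $\overline{S}$-side finite-type family for $\Gen(T\otimes\overline{S})$ identifies this common descended class with $\Gen(T)$ itself, so we conclude $\Gen(T)=\Gen(V)$.

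To upgrade $\Gen(T)=\Gen(V)$ to the tilting property of $T$, I would verify the three tilting axioms: projective dimension at most one from faithfully flat descent of a length-one projective resolution of $T\otimes\overline{S}$, and self-Ext vanishing $\Ext^1_{\overline{R}}(T,T^{(I)})=0$ via the base-change isomorphism combined with the vanishing over $\overline{S}$. The main obstacle is the final axiom, namely producing a short exact sequence $0\to\overline{R}\to T_0\to T_1\to 0$ with $T_0,T_1\in\Add(T)$. The corresponding sequence for $T\otimes\overline{S}$ can be descended along $\overline{\lambda}$ by faithful flatness, but certifying that the descended middle and right-hand terms lie in $\Add(T)$ rather than merely in the $\overline{R}$-additive closure of $T\otimes\overline{S}$ requires a delicate analysis of how $\Add$-closures behave under faithfully flat extensions; this is the heart of the technical work in \cite{Hr-St-Tr}.
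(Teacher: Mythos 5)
Note first that the paper does not prove this proposition at all: it is imported verbatim from \cite[Section 4]{Hr-St-Tr}, so your argument has to stand on its own. Its first half is sound up to a precise point. Finite type of the tilting class $\Gen(V)=\CS^{\perp}$, the base-change isomorphism for the finitely presented modules $L\in\CS$ of projective dimension one, faithful flatness, and the (citable) ascent formula $\Gen(V\otimes_{\overline{R}}\overline{S})=\{M\mid \Ext^1_{\overline{S}}(L\otimes\overline{S},M)=0 \ \forall L\in\CS\}$ do give the equivalence $N\in\Gen(V)\iff N\otimes_{\overline{R}}\overline{S}\in\Gen(V\otimes_{\overline{R}}\overline{S})=\Gen(T\otimes_{\overline{R}}\overline{S})$, and hence the inclusion $\Gen(T)\subseteq\Gen(V)$, since generation trivially ascends. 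But the step identifying the descended class with $\Gen(T)$ is unjustified: the finite-type family describing $\Gen(T\otimes_{\overline{R}}\overline{S})$ consists of finitely presented $\overline{S}$-modules that need not be extended from $\overline{R}$, and even if membership could be transported to an Ext-orthogonality condition over $\overline{R}$, nothing identifies the resulting class with $\Gen(T)$, because $T$ is not yet known to be tilting and $\Gen(T)$ is not known to be a torsion class of finite type. The missing inclusion $\Gen(V)\subseteq\Gen(T)$ --- i.e.\ descent of the property ``$N$ is generated by $T$'' from ``$N\otimes\overline{S}$ is generated by $T\otimes\overline{S}$'' --- is precisely the nontrivial content of the proposition, and your ``parallel descent argument'' does not deliver it.

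The second half has two further gaps. The self-Ext vanishing cannot be obtained ``via the base-change isomorphism'': $\Ext^1_{\overline{R}}(T,T^{(I)})\otimes_{\overline{R}}\overline{S}\cong\Ext^1_{\overline{S}}(T\otimes\overline{S},(T\otimes\overline{S})^{(I)})$ requires $T$ to be finitely presented with finitely generated syzygy, whereas the modules at stake here are large; what flatness does give is only $\Ext^1_{\overline{S}}(T\otimes\overline{S},Y)\cong\Ext^1_{\overline{R}}(T,Y)$ for $\overline{S}$-modules $Y$, which says nothing about $\Ext^1_{\overline{R}}(T,T^{(I)})$. Likewise, descent of projective dimension at most one is itself a nontrivial theorem (Raynaud--Gruson plus the work in \cite[Section 3]{Hr-St-Tr}), not a matter of ``descending a length-one resolution''. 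Finally, you explicitly leave open the axiom producing $0\to\overline{R}\to T_0\to T_1\to 0$ with $T_0,T_1\in\Add(T)$, and no mechanism is offered for descending such a sequence: a short exact sequence of $\overline{S}$-modules need not arise by base change, and controlling $\Add$-membership under faithfully flat extensions is exactly the hard technical core. Since these three points --- $\Gen(V)\subseteq\Gen(T)$, the vanishing $\Ext^1_{\overline{R}}(T,T^{(I)})=0$, and the coresolution of $\overline{R}$ --- together constitute essentially the whole statement, what you have is an organized restatement of the difficulties rather than a proof; completing it would amount to reproducing the actual arguments of \cite[Section 4]{Hr-St-Tr}.
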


We are ready to prove the descent property for $2$-term silting complexes.

\begin{theorem}\label{silting}
Suppose that $\lambda:R\to S$ is a faithfully flat ring homomorphism. If $\sigma:P^{-1}\to P^0$ is a homomorphism in $\Modr R$ such that $\sigma\otimes_R S$ is a $2$-term silting complex of $S$-modules then $\sigma$ is a $2$-term silting complex of $R$-modules. 
\end{theorem}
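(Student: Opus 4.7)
The strategy is to verify the two characterising conditions of Proposition~\ref{ps-s-k(p)} for $\sigma$: that $\sigma$ is partial silting and that for every $\frakp\in\Spec(R)$ either $T\otimes_R\kappa(\frakp)\neq 0$ or $\Ker(\sigma\otimes_R\kappa(\frakp))\neq 0$. The residue-field condition is the easy half. Fix $\frakp\in\Spec(R)$ and use surjectivity of $\lambda^\star$ (from faithful flatness) to pick $\frakq\in\Spec(S)$ with $\lambda^\star(\frakq)=\frakp$. Applying Proposition~\ref{ps-s-k(p)} to the silting complex $\sigma\otimes_R S$ at $\frakq$ produces two cases. If $(T\otimes_R S)\otimes_S\kappa(\frakq)\neq 0$, the isomorphism
$$(T\otimes_R S)\otimes_S\kappa(\frakq)\cong T\otimes_R\kappa(\frakp)\otimes_{\kappa(\frakp)}\kappa(\frakq)$$
of Lemma~\ref{p-si-q}, together with the faithful flatness of the nonzero $\kappa(\frakp)$-module $\kappa(\frakq)$, forces $T\otimes_R\kappa(\frakp)\neq 0$. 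If instead $\Ker((\sigma\otimes_R S)\otimes_S\kappa(\frakq))\neq 0$, that is $\frakq\in V_{\sigma\otimes_R S}$, then Lemma~\ref{lemmaA}(iii) yields $\frakp=\lambda^\star(\frakq)\in V_\sigma$, i.e. $\Ker(\sigma\otimes_R\kappa(\frakp))\neq 0$.

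For partial silting I propose to exploit the Thomason-subset correspondence of Theorem~\ref{main-corr}. Lemma~\ref{lemmaA}(i) actually strengthens to $V_{\sigma\otimes_R S}=\lambda^{\star-1}(V_\sigma)$, since whether $\frakq\in V_{\sigma\otimes_R S}$ depends only on $\lambda^\star(\frakq)$. Since $\sigma\otimes_R S$ is silting, the left-hand side is Thomason in $\Spec(S)$; my plan is to invoke faithfully flat descent for Thomason subsets along $\lambda^\star$ to conclude that $V_\sigma$ is Thomason in $\Spec(R)$. Theorem~\ref{main-corr} then delivers a silting class $\CE$ over $R$ whose associated Thomason subset is $V_\sigma$, realised by a 2-term silting complex $\sigma':Q^{-1}\to Q^0$. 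By Theorem~\ref{commutative-1} the complex $\sigma'\otimes_R S$ is silting, and the refined Lemma~\ref{lemmaA}(i) gives $V_{\sigma'\otimes_R S}=V_{\sigma\otimes_R S}$, so the bijection of Theorem~\ref{main-corr} applied over $S$ forces $\CALD_{\sigma\otimes_R S}=\CALD_{\sigma'\otimes_R S}$. Combining this with Lemma~\ref{basic1} and Lemma~\ref{var2.3}(b) applied to the silting $\sigma'$, I would identify $\CALD_\sigma$ with $\CE=\CALD_{\sigma'}$; in particular $T\in\CALD_\sigma$ and $\CALD_\sigma$ is closed under direct sums, so $\sigma$ is partial silting and hence, by Proposition~\ref{ps-s-k(p)}, silting.

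Two inputs are substantial. The first is faithfully flat descent for Thomason subsets, namely that $V\subseteq\Spec(R)$ is Thomason whenever $\lambda^{\star-1}(V)$ is Thomason in $\Spec(S)$; this reduces, via Theorem~\ref{main-corr}(2)$\Leftrightarrow$(3), to descent for Gabriel filters of finite type, but is the essential commutative-algebra input beyond the transfer lemmas already established. The second, and I expect the more delicate, is the identification $\CALD_\sigma=\CE$: Lemma~\ref{var2.3}(b) applies cleanly only to the silting complex $\sigma'$, giving $N\in\CE\Leftrightarrow N\otimes_R S\in\CALD_{\sigma\otimes_R S}\Leftrightarrow (N\otimes_R S)\in\CALD_\sigma\cap\Modr S$ via Lemma~\ref{basic1}. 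Passing from $(N\otimes_R S)\in\CALD_\sigma$ to $N\in\CALD_\sigma$ for general $N$ is where the argument needs care, and I would handle it by decomposing $\CALD_{\sigma\otimes_R S}=\bigcap_j\CALD_{\tau_j}$ with $\tau_j$ between finitely generated projective $S$-modules (using \cite[Theorem 2.3]{An-Hr} and \cite[Theorem 6.3]{Ma-Sto:2015}) and applying Lemma~\ref{transfer} to each $\tau_j$ so that faithful flatness of $\lambda$ transfers vanishing of $\Def_{\tau_j}(N\otimes_R S)$ back to the desired conclusion over $R$.
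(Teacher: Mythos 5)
Your handling of the residue-field condition (the paper's Claim 6) is correct and matches the paper, and your Thomason-subset argument reaches the same intermediate conclusion as the paper's opening steps: you produce a silting complex $\sigma'$ (the paper calls it $\rho$) with $V_{\sigma'}=V_\sigma$ and $\CALD_{\sigma'\otimes_R S}=\CALD_{\sigma\otimes_R S}$. (You also silently need descent of projectivity to know $P^{-1},P^0$ are projective over $R$; the paper notes this explicitly.)

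The genuine gap is the identification $\CALD_\sigma=\CALD_{\sigma'}$, which is the heart of the paper's proof (Claims 1--5). Your proposed mechanism --- decompose $\CALD_{\sigma\otimes_R S}=\bigcap_j\CALD_{\tau_j}$ with $\tau_j$ between finitely generated projective $S$-modules, then apply Lemma~\ref{transfer} --- does not do the job. Lemma~\ref{transfer} requires the complexes to be $R$-morphisms between finitely generated projective $R$-modules; arbitrary $S$-complexes $\tau_j$ are out of scope. If instead you take $\tau_j=\sigma'_j\otimes_R S$ coming from a finite-type decomposition $\CALD_{\sigma'}=\bigcap_j\CALD_{\sigma'_j}$ over $R$, the transfer argument only re-derives $N\in\CALD_{\sigma'}\Leftrightarrow N\otimes_R S\in\CALD_{\sigma'\otimes_R S}$, i.e. part of Lemma~\ref{var2.3}(b) for $\sigma'$ --- it says nothing about $\CALD_\sigma$, because $\sigma$ itself is not a morphism between finitely generated projectives and admits no analogous transfer. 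The real obstruction, which you correctly flag but do not resolve, is that $\CALD_\sigma$ is not known to be closed under direct sums or submodules, so neither implication between $N\in\CALD_\sigma$ and $\lambda^\star(N\otimes_R S)\in\CALD_\sigma$ is available: $N$ is a (pure) submodule of $\lambda^\star(N\otimes_R S)$, and $\lambda^\star(N\otimes_R S)$ is a quotient of $N^{(J)}$, not of $N$. The paper closes this gap with two substantial additional arguments: for $\CALD_\sigma\subseteq\CALD_\rho$ it shows (Claims 1--2) that for each $I$ in the Gabriel filter of $\rho$, $\Ker(\sigma\otimes_R R/I)$ is a projective generator of $\Modr R/I$ and builds a pushout with cokernel $E$ satisfying $E\otimes_R R/I=0$, hence forcing $M\otimes_R R/I=0$ for $M\in\CALD_\sigma$; and for $\CALD_\rho\subseteq\CALD_\sigma$ it passes to $\overline{R}=R/\mathrm{Ann}(T)$, invokes faithfully flat descent of $1$-tilting modules (Proposition~\ref{tilting-desc}) to show $T$ is tilting over $\overline{R}$ with $\Gen(T)=\Gen(V)$, and finishes with an explicit factorization through $P^{-1}\otimes_R\overline{R}$ (Claims 3--5). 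None of this is replaced by the defect-functor transfer you propose.
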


\begin{proof} Since $\sigma\otimes_R S$ is a $2$-term silting complex it follows that $P^{-1}\otimes_RS$ and $P^{0}\otimes_RS$ are projective $S$-modules. Using the descend property of projective modules, \cite{Gr-Ra}, it follows that the $R$-modules $P^{-1}$ and $P^{0}$ are projective. 

Let $\lambda^\star:\Spec(S)\to \Spec(R)$, $\lambda^\star(\frakq)=\frakq\cap R$ be the canonical map.
If $V_{\sigma\otimes_R S}\subseteq \Spec(S)$ is the Thomason set associated to $\sigma\otimes S$ then we use \cite[Lemma 3.15]{Hr-St-Tr} together with Lemma \ref{lemmaA}(ii) to conclude that $\lambda^\star(V_{\sigma\otimes_R S})$ is a Thomason subset in $\Spec(R)$. By Theorem \ref{main-corr} and Corollary \ref{p-nin-V} there exists a $2$-term silting complex $\rho:L^{-1}\to L^{0}$ in $\Modr R$ such that $V_\rho=\lambda^\star(V_{\sigma\otimes_R S})$. Using the ascent property proved in Theorem \ref{commutative-1} we conclude that $\rho\otimes_R S$ is silting in $\Modr S$.  

We apply Lemma \ref{lemmaA}(iii) to $\rho$ and $\sigma$, and we obtain that $$\lambda^\star(V_{\rho\otimes_R S})=V_\rho=\lambda^\star(V_{\sigma\otimes_R S})=V_\sigma.$$
We use Lemma \ref{lemmaA}(i) and we obtain the equality $V_{\rho\otimes_R S}=V_{\sigma\otimes_R S}$. It follows that the homomorphisms $\rho\otimes_R S$ and $\sigma\otimes_R S$ induce the same silting class, i.e. $\CALD_{\sigma\otimes_R S}=\CALD_{\rho\otimes_R S}$.

\medskip

Let us denote $T=\Coker(\sigma)$ and
$\CG$ be the Gabriel filter associated to $\rho$ as in Theorem \ref{main-corr} and Lemma \ref{filtre}. It follows that 
$$\CALD_\rho=\bigcap_{I\in\CG}\Ker(-\otimes_R R/I).$$ 
Since $T\otimes S\in \CALD_{\rho\otimes_R S}$, we can use Lemma \ref{var2.3} to conclude that $T\in \CALD_\rho$. Therefore, for every $I\in \CG$ we have $T\otimes_R R/I=0$. 

\begin{claim}\label{claim-1}
For every $I\in\CG$ there exists a set $K$ and a pushout diagram
$$\xymatrix{(P^{-1})^{(K)}\ar[r]^{\sigma^{(K)}}\ar[d]^{\delta} & (P^{0})^{(K)}\ar[r]\ar[d] & T^{(K)} \ar[r]\ar@{=}[d] & 0\\
R\ar[r]^\alpha & E\ar[r] & T^{(K)}\ar[r] &0,
}$$
such that $E\otimes_R R/I=0$. 
\end{claim}

Let $I\in \CG$ be a fixed ideal. We will use the notation $\sigma\otimes_R R/I=\widehat{\sigma}_{R/I}$. 

Applying $-\otimes_R R/I$ to $\sigma$ we obtain a short exact sequence of $R/I$-modules 
$$0\to \Ker(\widehat{\sigma}_{R/I})\overset{u}\to P^{-1}\otimes_R R/I\overset{\widehat{\sigma}_{R/I}}\longrightarrow P^0\otimes_R R/I\to 0,$$ 
which splits since $P^0\otimes_R R/I$ is projective. Therefore $\Ker(\widehat{\sigma}_{R/I})$ is projective. 

Moreover, since this exact sequence splits, for every $\frakp\in\Spec(R/I)$ we obtain a commutative diagram 
$$\xymatrix{\Ker(\widehat{\sigma}_{R/I})\otimes_{R/I}\kappa(\frakp) \ar@{>->}[r]^{u\otimes_{R/I}\kappa(\frakp)} \ar[d] & P^{-1}\otimes_{R} R/I\otimes_{R/I}\kappa(\frakp)\ar@{->>}[r]\ar[d] & P^0\otimes_{R} R/I\otimes_{R/I}\kappa(\frakp) \ar[d] \\
 \Ker(\sigma\otimes_{R}\kappa(\frakp))\ar@{>->}[r] & P^{-1}\otimes_{R}\kappa(\frakp)\ar@{->>}[r] & P^0\otimes_{R}\kappa(\frakp) 
}$$
such that the horizontal lines are (split) short exact sequences and the vertical arrows are isomorphisms. 
Since  $V_\sigma$ is the Thomason set corresponding to $\CG$, we have  $V(I) \subseteq V_\sigma$. It follows that $\Ker(\widehat{\sigma}_{R/I})\otimes_{R/I}\kappa(\frakp)\neq 0$ for all $\frakp\in \Spec(R/I)$, hence  $\Ker(\widehat{\sigma}_{R/I})$ is a projective generator for $\Modr R/I$. 

Therefore, there exists a set $K$ and an $R/I$-epimorphism $\gamma:\Ker(\widehat{\sigma}_{R/I})^{(K)}\to R/I$. We also fix a homomorphism $v:(P^{-1})^{(K)}\otimes_R R/I\to \Ker(\widehat{\sigma}_{R/I})^{(K)}$ such that $vu^{(K)}=1$. If $\pi:R\to R/I$ is the canonical surjective homomorphism, then 
there exists a homomorphism $\delta:(P^{-1})^{(K)}\otimes_R R\to R$ such that $\pi\delta=\gamma v (1\otimes_R\pi)$. 
In order to simplify the presentation we identify $(P^{-1})^{(K)}\otimes_R R$ with $(P^{-1})^{(K)}$, and all these data are represented in the following commutative diagram:
$$\xymatrix{ &(P^{-1})^{(K)} \ar[rr]^{\sigma^{(K)}}\ar@{-->}[rdd] \ar[d]_{1\otimes_R \pi} && (P^0)^{(K)}\ar[d]^{1\otimes_R \pi}\\
\Ker(\widehat{\sigma}_{R/I})^{(K)}\ar@{>->}[r]^{u^{(K)}}\ar@{->>}[d]^\gamma & (P^{-1}\otimes_{R} R/I)^{(K)}\ar@{->>}[rr]^{(\sigma\otimes_{R}R/I)^{(K)}} \ar@/_2pc/[l]_v && (P^{0}\otimes_{R} R/I)^{(K)}\\
R/I\ar@{<<-}[rr]^\pi&&R&\ ,
}$$
where the dashed arrow is $\delta$. We apply the functor $-\otimes_R R/I$ to this diagram, and we obtain the commutative diagram
$$\xymatrix{\Ker(\sigma^{(K)} \otimes_{R} R/I)\ar@{>->}[rr]^{u^{(K)}}\ar@{-->}[d]^{\alpha} &&(P^{-1})^{(K)}\otimes_{R} R/I \ar@/^2pc/[rdd]^{\delta\otimes_R R/I} \ar[d]_{1\otimes_R \pi\otimes_R R/I} & 
\\
\Ker(\widehat{\sigma}_{R/I})^{(K)}\otimes_{R} R/I\ar@{>->}[rr]^{u^{(K)}\otimes_R R/I}\ar@{->>}[d]^{\gamma\otimes_R R/I} && (P^{-1}\otimes_{R} R/I)^{(K)}\otimes_{R} R/I \ar@/_2pc/[ll]_{v\otimes_R R/I} & \\
R/I\otimes_{R} R/I\ar@{<<-}[rrr]^{\pi\otimes_R R/I}&&&R\otimes_{R} R/I,
}$$ where $\alpha$ is the canonical map. Using the obvious identifications and the natural isomorphisms $R/I\otimes_R R/I\cong R/I\otimes_{R/I} R/I\cong R/I$, \cite[Proposition II.2]{Bo}, it follows that $\alpha$ and $\pi\otimes_R R/I$ are isomorphisms. It is not hard to conclude that $(\delta\otimes_R R/I)u^{(K)}$ is an epimorphism.

We construct the pushout diagram 
$$\xymatrix{(P^{-1})^{(K)}\ar[r]^{\sigma^{(K)}}\ar[d]^{\delta} & (P^{0})^{(K)}\ar[r]\ar[d] & T^{(K)} \ar[r]\ar@{=}[d] & 0\\
R\ar[r]^\alpha & E\ar[r] & T^{(K)}\ar[r] &0.
}$$ Applying the tensor product $-\otimes_R R/I$, and using the commuting property of the tensor product with respect to direct sums together with the well-known fact that the direct sums preserve exact sequences we obtain the commutative diagram 
$$\xymatrix{ \Ker(\widehat{\sigma}_{R/I})^{(K)}\ar@{>->}[r]^{u^{(K)}}\ar[d] & (P^{-1}\otimes_{R} R/I)^{(K)}\ar@{->>}[rr]^{(\widehat{\sigma}_{R/I})^{(K)}}\ar[d]^{\delta\otimes_{R} R/I}  && (P^{0}\otimes_{R} R/I)^{(K)}\ar[d]\\
 \Ker(\alpha\otimes_{R} R/I)\ar@{>->}[r] & R\otimes_{R} R/I \ar@{->>}[rr]^{\alpha\otimes_R R/I} && E\otimes_{R} R/I.
}$$
Since $(\delta\otimes_R R/I)u^{(K)}$ is an epimorphism, it follows that  $E\otimes_R R/I=0$. 

\begin{claim}\label{claim-2}
$\CALD_\sigma\subseteq \CALD_\rho$. 
\end{claim}

In order to prove this, let us fix a module $M\in \CALD_\sigma$. For every $I\in\CG$ we construct a pushout diagram as in Claim \ref{claim-1}.

We consider an epimorphism $f:R^{(L)}\to M$. Then $f\delta^{(L)}:[(P^{-1})^{(K)}]^{(L)}\to M$ can be extended to a homomorphism $[(P^{0})^{(K)}]^{(L)}\to M$ through $[\sigma^{(K)}]^{(L)}$. It follows that there exists a homomorphism $g:E^{(L)}\to M$ $f$ such that $f=g\alpha{(L)}$. Since $f$ is an epimorphism, we also have that $g$ is an epimorphism. Using the property $E\otimes_R R/I=0$, we obtain $M\otimes_R R/I=0$.

It follows that $M\otimes_R R/I=0$ for all $I\in \CG$. Therefore, $M\in \CALD_\rho$, and the proof for the inclusion $\CALD_\sigma\subseteq \CALD_\rho$ is complete.

\begin{claim}\label{claim-3}
If $V=\Coker(\rho)$ then $\Ann(T)=\Ann(V)$. 
\end{claim}

Let $U=\Ann(V)$. We have $US\subseteq \Ann(V\otimes_R S)=\Ann(T\otimes_R S)$. But $S$ is faithfully flat, hence $U\subseteq US$. We can view $T$ as a submodule of $T\otimes_R S$. Therefore, $UT=0$, and it follows that $U\subseteq \Ann(T)$. 

Since in this proof we only used the equality $\Ann(V\otimes_R S)=\Ann(T\otimes_R S)$, it follows that the converse inclusion is valid, so the proof for Claim \ref{claim-3} is complete. 

\medskip

In the following we will use the notations $U=\Ann(V)=\Ann(T)$, $\overline{R}=R/U$, and $\overline{S}=S/US$.

\begin{claim}\label{claim-4}
The $\overline{R}$-modules $V$ and $T$ are tilting and $\Gen(T)=\Gen(V)$.  
\end{claim}

We view $\Modr \overline{R}$ as a full subcategory of $\Modr R$ via the canonical homomorphism $R\to R/U$. Then for every $M\in \Modr \overline{R}$ we have $UM=0$, and it follows that 
the left side homomorphism in the exact sequence $$M\otimes_R US\to M\otimes_R S\to M\otimes_R \overline{S}\to 0$$ is zero. Therefore, the restrictions of the functors $-\otimes_R S$ and $-\otimes_R \overline{S}$ to $\Modr \overline{R}$ are naturally isomorphic. Since $\overline{S}$ is also an $\overline{R}$-module, we also have a natural isomorphism  $-\otimes_R \overline{S}\cong -\otimes_{\overline{R}} \overline{S}$ for the restrictions of these functors to $\Modr \overline{R}$, \cite[Proposition II.2]{Bo}. This shows that the canonical homomorphism $\overline{\lambda}:\overline{R}\to \overline{S}$, induced by $\lambda$, is faithfully flat.   

From \cite[Proposition 3.2 and Proposition 3.10]{AMV:2015} it follows that $V$ is a tilting $\overline{R}$-module. By \cite[Lemma 2.4]{Hr-St-Tr} we have that $V\otimes_R S=V\otimes_{\overline{R}}\overline{S}$ is a tilting $\overline{S}$-module. This implies that the annihilator of the $\overline{S}$-module $V\otimes_{\overline{R}}\overline{S}$ is zero, and it follows that $\Ann_{\overline{S}}(T\otimes_{\overline{R}}\overline{S})=0$. Since $\sigma\otimes_R \overline{S}$ is a $2$-term silting complex, we can use \cite[Proposition 3.2 and Proposition 3.10]{AMV:2015} one more time to obtain that $T\otimes_R \overline{S}=T\otimes_{\overline{R}}\overline{S}$ is a tilting $\overline{S}$-module. By Proposition \ref{tilting-desc}, 
we obtain that $T$ is tilting as an $\overline{R}$-module and $\Gen(T)=\Gen(V)$. 

\begin{claim}\label{claim-5}
We have $\CALD_\rho\subseteq \CALD_\sigma$. In particular $T^{(I)}\in \CALD_\sigma$ for all sets $I$. 
\end{claim}

In $\Modr \overline{R}$ we have the projective resolution 
$$P^{-1}\otimes_R \overline{R}\overset{\sigma\otimes_R\overline{R}}\longrightarrow P^{0}\otimes_R \overline{R}\longrightarrow T\to 0.$$ But $T$ is tilting as an $\overline{R}$-module, hence it is of projective dimension at most $1$. It follows that we can find in $\Modr \overline{R}$ a direct decomposition $P^{-1}\otimes_R \overline{R}=\overline{P}\oplus \overline{K}$, where $\overline{K}=\Ima(\sigma\otimes_R\overline{R})$ and $\overline{P}=\Ker(\sigma\otimes_R\overline{R})$. 

Since the complex $\sigma\otimes_R S$ is silting in $\Modr S$, we can apply Theorem \ref{commutative-1} to conclude that 
$$\sigma\otimes_R \overline{S}\cong \sigma\otimes_R\overline{R}\otimes_{\overline{R}}\overline{S}\cong \sigma\otimes_R S\otimes_{\overline{S}}\overline{S} $$ is $2$-term silting complex in $\Modr \overline{S}$. Moreover, since the induced ring homomorphism $\overline{\lambda}:\overline{R}\to \overline{S}$ is faithfully flat, and $T$ is a tilting $\overline{R}$-module, we obtain by using \cite[Lemma 2.4]{Hr-St-Tr} that $T\otimes_{\overline{R}}\overline{S}$ is tilting.
It follows that $$\CALD_{\sigma\otimes_{\overline{R}}\overline{S}}=\Gen(T\otimes_{\overline{R}}\overline{S})= 
\Ker\Ext^1_{\overline{S}}(T\otimes_{\overline{R}}\overline{S},-),$$ hence $$\Hom_{\overline{S}}(\overline{P}\otimes_{\overline{R}}\overline{S},\Gen(T \otimes_{\overline{R}}\overline{S}))=0.$$ Since $\overline{S}$ is faithfully flat, the last equality implies that $\Hom_{\overline{R}}(\overline{P},\Gen(T))=0$.

Let $X\in\CALD_\rho=\Gen(V)=\Gen(T)$. Then $U\leq\Ann(X)$. If $f:P^{-1}\to X$ is a homomorphism, 
we obtain the commutative diagram 
$$ \xymatrix{0\ar[r] &P^{-1}\otimes_R U\ar[r]\ar[d]&P^{-1}\otimes_R R\ar[r]\ar[d]^{f\otimes_R R} & P^{-1}\otimes_R \overline{R}\ar[d]\ar[r]\ar@{-->}[ldd]_<<<<<<<<{\overline{f}} & 0 \\
&X\otimes_R U\ar[r]\ar[d] &X\otimes_R R\ar[r]\ar[d]& X\otimes_R \overline{R}\ar[r]\ar[d] & 0\\
0\ar[r] & XU\ar[r] &X \ar[r]& X/XU\ar[r] & 0,
}$$ where the vertical arrows in the bottom rectangle are the natural ones. Since $XU=0$, it follows that  
$f$ factorizes through $P^{-1}\otimes_R \overline{R}$. Therefore, there exists $\overline{f}:P^{-1}\otimes_R \overline{R}\to X$ such that $f=\overline{f}h$, where $h:P^{-1}\overset{\cong}\to P^{-1}\otimes_R R\to P^{-1}\otimes_R \overline{R}$ is the canonical map.

But $T$ is tilting as an $\overline{R}$-module. It follows that there exists a homomorphism $g:P^0\otimes_R \overline{R}\to X$ such that $\overline{f}_{|\overline{K}}=g(\sigma\otimes_R\overline{R})_{|\overline{K}}$. Since $\Hom(\overline{P},X)=0$, we have $\overline{f}(\overline{P})=0$. Then $\overline{f}=g(\sigma\otimes_R\overline{R})$, and we obtain the commutative diagram 
$$\xymatrix{ P^{-1} \ar[rr]^{\sigma} \ar[d]_h && P^0 \ar[r]\ar[d] & T \ar[r] \ar@{=}[d] & 0 \\
P^{-1}\otimes_R \overline{R} \ar[rr]^{\sigma\otimes_R\overline{R}} \ar[d]_{\overline{f}} && P^0\otimes_R \overline{R} \ar[r]\ar[lld]^g & T \ar[r]  & 0 ,\\
X & & &
}$$ 
where the composition of the vertical left side arrows is $f$. Then $f$ factorizes through $\sigma$, hence $X\in\CALD_\sigma$. 

We conclude that $\CALD_\rho\subseteq \CALD_\sigma$. 

\medskip

Using Claim \ref{claim-2} and Claim \ref{claim-5} we obtain $\CALD_\sigma= \CALD_\rho$, and $T\in \CALD_\sigma$. It follows that $\sigma$ is partial silting. By Proposition \ref{ps-s-k(p)}, in order to complete the proof, it is enough to prove 

\begin{claim}\label{claim-6}
For every $\mathfrak{p}\in\Spec(R)$ we have $T\otimes_R\kappa(\frakp)\neq 0$ or $\Ker(\sigma\otimes_R\kappa(\frakp))\neq 0$.
\end{claim}

Let $\frakp\in\Spec(R)$. Since $\lambda$ is faithfully flat, the induced map of spectra $$\lambda^\star:\Spec(S)\to \Spec(R),\ \lambda^\star(\frakq)=\frakq\cap R,$$ is surjective, so there exists $\frakq\in\Spec(S)$ such that $\frakq\cap R=\frakp$.  By Lemma \ref{p-si-q} we observe that $\kappa(\frakq)$ is faithfully flat as a $\kappa(\frakp)$-module (see also the proof of \cite[Proposition 3.16]{Hr-St-Tr}). Applying the functors from Lemma \ref{p-si-q} to $\sigma$ it follows that $$\Ker(\sigma\otimes_R S\otimes_S\kappa(\frakq))\cong \Ker(\sigma\otimes_R \kappa(\frakp))\otimes_{\kappa(\frakp)}\kappa(\frakq),$$ and 
$$T\otimes_R S\otimes_S\kappa(\frakq)\cong T\otimes_R \kappa(\frakp)\otimes_{\kappa(\frakp)}\kappa(\frakq).$$ 
By Proposition \ref{ps-s-k(p)}, for every $\frakq\in \Spec(S)$ we have $T\otimes_R S\otimes_S\kappa(\frakq)\neq 0$ or $\Ker(\sigma\otimes_R S\otimes_S\kappa(\frakq))\neq 0$. Since $\kappa(\frakq)$ is faithfully flat as a $\kappa(\frakp)$-module, we obtain Claim \ref{claim-6}, and the proof is complete.  
\end{proof}

We close the paper with some comments on the proof of Theorem \ref{silting}.

\begin{remark} (a) From Claim \ref{claim-3} and Claim \ref{claim-4} it follows that $T$ is tilting as an $R/\Ann(T)$-module. This is equivalent to the fact that the $R$-module $T$ is a finendo quasi-tilting module by \cite[Proposition 3.2]{AMV:2015}. However, this is not enough to conclude that $T$ is a silting $R$-module, as it is proved in \cite[Example 5.4]{An-Hr}
and \cite[Example 5.12]{BHPST}.

(b) In the noetherian case the proof can be done using the Claims \ref{claim-3}--\ref{claim-6} (i.e. the inclusion  $\CALD_\rho\subseteq\CALD_\sigma$ and Claim \ref{claim-6}) in the following way. We view $\sigma$ as a complex concentrated in $-1$ and $0$. It is easy to see that $\Hom_{\BD(R)}(\sigma,\sigma^{(I)}[1])=0$ if and only if $T^{(I)}\in\CALD_\sigma$. Therefore, we can use \cite[Theorem 4.9]{AMV:2015} together with the inclusion $\CALD_\rho\subseteq\CALD_\sigma$ to observe that $\sigma$ is a $2$-term silting complex if and only if it is a generator in $\BD(R)$. In the noetherian case the converse of Lemma \ref{generator} is also valid by using \cite[Theorem 9.5]{BIK} or \cite[Theorem 2.8]{Nee-chrom}. It follows that $\sigma$ is a generator if and only if $\sigma\otimes_R\kappa(\frakp)$ is not an isomorphism for all $\frakp\in \Spec(R)$. Therefore, by using Claim \ref{claim-6}, we obtain that $\sigma$ is a generator, and the proof is complete.
\end{remark}

\subsection*{Acknowledgements.} I thank Lidia Angeleri-H\"ugel and Michal Hrbek
for many valuable comments on a first version of the manuscript.


\begin{thebibliography}{99}


\bibitem{AIR} Adachi, T., O. Iyama, I. Reiten. \textit{$\tau$-tilting theory}, {Compositio Mathematica} 150
(2014): 415–-452.

\bibitem{An-18} L. Angeleri-H\"ugel. \textit{Silting objects}, preprint, arXiv:1809.02815 [math.RT].

\bibitem{An-abun} L. Angeleri-H\"ugel. \textit{On the abundance of silting modules}, Contemporary Mathematics, vol. 716 (2018), 1--24.

\bibitem{An-Hr}  L. Angeleri-H\"ugel, M. Hrbek. \textit{Silting modules over commutative rings}, Int. Math. Res. Not. 2017, 4131--4151.


\bibitem{AMV:2015} L. Angeleri-H\"ugel, F. Marks, J. Vit\'oria. \textit{Silting Modules}, Int. Math. Res. Not. 2016,  1251--1284.

\bibitem{AMV2} L. Angeleri-H\"ugel, F. Marks, and J. Vit\'oria. \textit{Silting modules and ring epimorphisms}, Adv. Math. 303 (2016), 1044--1076.


\bibitem{As-Ma} I. Assem, N. Marmaridis. 
\textit{Tilting modules over split-by-nilpotent extensions},
Commun. Algebra 26 (1998), 1547--1555. 




\bibitem{BHPST} S. Bazzoni, I. Herzog, P. P\v r\'\i hoda, J. \v Saroch, J. Trlifaj. \textit{Pure Projective Tilting Modules}, preprint, arXiv:1703.04745 [math.RT].

\bibitem{BIK} D. J. Benson, S. B. Iyengar, H. Krause. \textit{Colocalizing subcategories and cosupport}, J. Reine Angew. Math.
673 (2012), 161--207.

\bibitem{Bo} N. Bourbaki. Elements of Mathematics, Algebra, Chapters 1--3.



\bibitem{Br-Mo} S. Breaz, G.C. Modoi. \textit{Equivalences induced by infinitely generated silting modules}, preprint  	arXiv:1705.10981 [math.RT].


\bibitem{Br-Po} S. Breaz, F. Pop. \textit{Cosilting modules},  Algebr. Represent. Theory 20 (2017),
1305--1321.


\bibitem{Br-Ze-2018} S. Breaz, J. \v Zemli\v cka. \textit{Torsion classes generated by silting modules}, 
Ark. Mat. 56 (2018), 15--32. 

\bibitem{Br_Ze:2014} S. Breaz, J. \v Zemli\v cka. \textit{The defect functor of a homomorphism and direct unions},
{Algebr. Represent. Theory}
19 (2016), 181--208.

\bibitem{EGT} S. Estrada, P. Guil Asensio, J. Trlifaj.
\textit{Descent of restricted flat Mittag-Leffler modules and generalized vector bundles},
Proc. Am. Math. Soc. 142 (2014), 2973--2981. 




\bibitem{DF} H. Derksen, J. Fei. \textit{General presentations of algebras}, Adv. Math. 278 (2015), 210--237. 


\bibitem{fuchs} Fuchs L. {Infinite Abelian Groups}. vol. 1 Academic
Press, 1970.


\bibitem{Ga-Pr} G. Garkusha, M. Prest.
\textit{Torsion classes of finite type and spectra}. in 
K-theory and noncommutative geometry. Proceedings of the ICM 2006 satellite conference, Valladolid, Spain, August 31--September 6, 2006. European Mathematical Society (EMS). EMS Series of Congress Reports (2008), 393--412. 






\bibitem{Gr-Ra} L. Gruson, M. Raynaud. \textit{Crit\` eres de platitude et de projectivit\' e}, Invent. Math. 13 (1971), 1--89.

\bibitem{Hr-St-Tr} M. Hrbek, J. \v Stovi\v cek, J. Trlifaj, \textit{Zariski locality of quasi-coherent sheaves associated with tilting}, preprint arXiv:1712.08899 [math.RT].

\bibitem{Ke-Vo} B. Keller, D. Vossieck. \textit{Aisles in derived categories}, Bull. Soc. Math. Belg. Sér. A 40 (1988), 239--253.



\bibitem{Jasso} G. Jasso. \textit{Reduction of $\tau$-tilting modules and torsion pairs}, {Int. Math.
Res. Not.} 16 (2014),  7190--7237.



\bibitem{Ma-Sto:2015} F.  Marks, J. \v S\v tov\'\i\v cek. \textit{Universal localisations via silting}, preprint
arXiv:1605.04222 [math.RA].

\bibitem{Mats-comm} H. Matsumura. Commutative ring theory. Cambridge Studies in Advanced Mathematics, 8. Cambridge University Press. XIII, 1986. 

\bibitem{Mi} J-i. Miyachi. \textit{Extensions of rings and tilting complexes}, {J. of Pure and Appl. Alg.} 105 
(1995) 183--194. 

\bibitem{Nee-chrom} A. Neeman,  \textit{The chromatic tower for D(R). With an appendix by Marcel B\"okstedt},
{Topology} 31 (1992), 519--532. 

 
\bibitem{Prest} M. Prest, Purity, spectra and localisation, Cambridge University Press 2009.

\bibitem{Ri} J. Rickard. \textit{Derived equivalences as derived functors}, {J. London Math. Soc. (2)} 43 (1991) 436--456. 
 

\bibitem{silver} L. Silver. \textit{Noncommutative localizations and applications}, J. Algebra 7 (1967), 44--76.






\bibitem{ZW2} P. Zhang, J. Wei. \textit{Cosilting complexes and AIR-cotilting modules}, J. Algebra
491 (2017), 1--31. 

\bibitem{Wei-isr} J. Wei. \textit{Semi-tilting complexes}, Israel J. Math. 194 (2013), 871--893.


\bibitem{Wis} R. Wisbauer. Foundations of module and ring theory,
Algebra, Logic and Applications. 3., Gordon and Breach Science Publishers,  1991.



\bibitem{staks-04VM} ***, 
The Stacks project, 
 https://stacks.math.columbia.edu/tag/04VM

\end{thebibliography}
\end{document}